\newtheorem{teo}{Theorem}[section]
\newtheorem{prop}{Proposition}[section]
\newtheorem{lemma}{Lemma}[section]
\newtheorem{corollary}{Corollary}[section]
\newtheorem{assumption}{Assumption}[section]
\theoremstyle{definition}
\newtheorem{defi}[teo]{Definition}
\theoremstyle{remark}
\newtheorem{remark}{Remark}[section]
\begin{document}

\author{Manuel Guerra\thanks{CEMAPRE, Instituto Superior de Economia e Gest\~ao, Universidade de Lisboa, Rua do Quelhas 6, Lisbon, Portugal, Email: mguerra@iseg.ulisboa.pt}, \
Cl\'audia Nunes\thanks{Department of Mathematics and
CEMAT, Instituto Superior T\'ecnico, Universidade de
Lisboa, Av. Rovisco Pais, 1049-001 Lisboa, Portugal,
Email: cnunes@math.tecnico.ulisboa.pt}
\ and Carlos Oliveira\thanks{Department of Mathematics and
CEMAT, Instituto Superior T\'ecnico, Universidade de
Lisboa, Av. Rovisco Pais, 1049-001 Lisboa, Portugal,
Email: carlosmoliveira@tecnico.ulisboa.pt}}

\title{\sc On a Class of Optimal Stopping Problems with Applications to Real Option Theory}

\maketitle

\begin{abstract}

We consider an optimal stopping time problem  related with many models found in real options problems. The main goal of this work is to bring for the field of real options, different and more realistic pay-off functions, and negative interest rates. Thus, we present analytical solutions for a wide class of pay-off functions, considering quite general assumptions over the model. Also, an extensive and general sensitivity analysis to the solutions, and an economic example which highlight the mathematical difficulties in the standard approaches, are provided.

\mbox{}
\newline
{\bf Keywords and phrases.\/} Control, optimal stopping times, real options, free-boundary problems.


\mbox{}
\newline
{\bf AMS (2010) Subject Classifications.\/} Primary 93E20;
secondary 60G40, 90B50.
\end{abstract}

%
%

\section{Introduction}
Optimal stopping problems have been studied intensively in the mathematical context. Particularly, significant contributions have been made in the past decades motivated by financial applications. We refer to Peskir and Shiryaev \cite{Shiryaev} for a recent and important survey on optimal stopping and free-boundary problems, and the methods commonly used to solve such problems. 

In this work, we address some questions related to optimal stopping problems, with particular focus on possible applications to Real Option Theory. In the real option framework, there is the right, but not the obligation, to undertake certain business initiatives, such as deferring, abandoning, expanding, staging or contracting a capital investment project, according to the uncertainty of the market and the partial or total irreversibility of the decisions. Moreover, the owner of his option chooses the right moment to embrace the decision, in order to maximize the value of the associated project. We refer to Dixit and Pindyck \cite{DixitPindyck}, McDonald and Siegel \cite{McDonald:Siegel:86}, Dixit \cite{dixit1989entry}, Trigeorgis \cite{trigeorgis} and references therein for good examples of seminal works on real options.

We consider the stochastic process, hereby denoted by $X=\{X(t), t \geq 0\}$, which follows a geometric Brownian motion (GBM) given by the stochastic differential equation:
\begin{equation}\label{GBM}
dX(t)=\alpha X(t)dt+\sigma X(t)dW(t), \quad X(0)=0,
\end{equation}
where $W=\{W(t), t \geq 0\}$ represents the (standard) Brownian motion and $\alpha,\sigma\in\mathbb{R}$. Following the classical references of real options, as the ones above mentioned, the process $X$ represents a stochastic economic indicator, for example, the price of or the demand for a product.  Furthermore, we let $\Pi$ be the running function  (i.e., the function that quantifies the revenue associated with the economic indicator). Since the running function $\Pi$ may have different shapes, discontinuities and behaviours in different problems, in this paper, we will assume general conditions over $\Pi$, that will be stated later on. 

We introduce the following functional:
\begin{align}
J(x,\tau)=E\left[\int_0^{\tau}e^{-rs}\Pi(X(s))ds\vert X(0)=x\right],  \label{optimal1}
\end{align}
where $\tau$ is a stopping time adapted to the filtration generated by the stochastic process $X$. Finally, the  problem  that we want to solve can be defined as follows:
\begin{equation}
V(x)=\sup_{\tau\in S}J(x,\tau),\label{optimal00}
\end{equation}
where $S$ is the set of all stopping times which will be properly defined in the next section. Consequently, solving the optimal stopping problem \eqref{optimal00} is equivalent to finding the stopping  strategy $\tau^*\in{\cal S}$ which maximizes $J(x,.)$ for all $x\in ]0,\infty[$. For the rest of the paper, we call $V$ the \textit{value function}. Also, we will write $E_x[.]$ instead of $E[.\vert X(t)=x]$ to ease the presentation. 

Motivated by the applications that come from real options, we assume an infinite time horizon and a discounted version of stopping time problems, where the discount factor is the interest rate  (fixed and known,  hereby denoted by $r$).  
Usually, $r$ is assumed to be positive, as in the case of Peskir and Shiryaev \cite{Shiryaev} (see section 6.3) or Knudsen, Meister and Zervos \cite{knudsen1998valuation} or Guerra, Nunes and Oliveira \cite{nossoartigo}. It may also be assumed to be equal to zero, as in the case of R\"{u}schendorf and Urusov \cite{ruschendorf2008class}. Here, we relax this assumption, letting $r$ take positive and negative values.

The optimal stopping time problem defined above can be easily linked to a particular real option: the abandonment option, which is studied in different contexts (see, for instance, Dixit and Pindyck \cite{DixitPindyck}, Guerra, Nunes and Oliveira \cite{nossoartigo}, Hagspiel, Huisman, Kort and Nunes \cite{hagspiel2016escape}, Alverez \cite{alvarez1999optimal} and references therein). Another strictly related problem to the one introduced above  is the "optimal entrance time problem", which appears in real options, for example, when one discusses the optimal time to invest in a different product or market (see for instance Dixit \cite{dixit1989entry} or Stokey \cite{stokey2016wait} ) . Formally, the entrance time problem is presented as:
\begin{align}\label{entry problem}
G(x)=\sup\limits_{\tau\in{\cal S}} L(\tau,x), \quad \text{where}\quad L(\tau,x)=E_x\left[\int_{\tau}^{\infty}e^{-r s}\Pi(X(s))ds\right].
\end{align}
where $X$, $\tau$ and ${\cal S}$ are as described before. Mathematically speaking, optimal stopping times and optimal entrance times share some common characteristics and they can both be studied in the same framework as free boundary problems.  Throughout the paper, we will establish some relations between the two problems here presented.

In order to characterize the solution of the optimal stopping problem \eqref{optimal00} under our assumptions, we present a verification theorem that guarantees that the solution is continuously differentiable ($C^1$) with absolutely continuous derivative ($AC$). Under different assumptions than the ones that we will assume here, similar results can be found, for instance, in Knudsen, Meister and Zervos \cite{knudsen1998valuation}, R\"{u}schendorf and Urusov \cite{ruschendorf2008class} or Guerra, Nunes and Oliveira \cite{nossoartigo}. 

Our contribution to the state of the art on this topic is precisely to obtain closed form analytic solutions to the problem \eqref{optimal00} under different conditions of the function $\Pi$ and parameters involved. In case we make the usual assumptions about $r$ and the function $\Pi$ (namely, if $r>0$ and $\Pi$ is monotonic), our solution agrees with the known solution for the optimal exit decision of a firm with a constant abandonment cost (that can be found, for example, in  Dixit and Pindike \cite{DixitPindyck}). Notwithstanding, our characterization can also be used in other problems involving more complex profit functions. 

Regardless of the fact that the majority of research in Real Option Theory considers pay-off functions to be monotonically increasing, the possibility of having different behaviours is admitted. See, for instance, the discussion about ceilling prices in Dixit and Pindyck \cite{DixitPindyck} or the gross profit functions' behaviour described in Dahan and Srinivasan \cite{dahan2011impact}.
Thus, we bring to the real option analysis the possibility of studying pay-off functions with different shapes. Therefore, we illustrate the potential benefits from this representation by using an example of a profit function that is non-monotonic.   

Furthermore, we also provide an extensive analysis of the stopping and continuation regions' behaviour when $\alpha$ and $\sigma^2$ are changing. In particular, we get surprising results when $r<0$. In the last few years, some research has been made in order to understand how the decisions under uncertainty may be influenced when $r$ is stochastic or when $r$ is the main source of uncertainty for the project (see for instance Dias and Shackleton \cite{dias2011hysteresis} and Alvarez and Koskela \cite{alvarez2006irreversible}). Although this work does not consider stochastic interest rates, it brings some additional information about the decision making when $r$ is not constant and strictly positive.

Since there are many other applitations of optimal stopping in financial mathematics, we refer to the alphabetically-ordered list of important contributions in this field: Arkin \cite{arkin2015threshold}, Belomestny,  R{\"u}schendorf, and Urusov \cite{belomestny2010optimal}, Bronstein, Hughston, Pistorius and Zervos \cite{bronstein2006discretionary}, Chevalier, Vath, Roch and Scotti \cite{chevalier2015optimal}, Dayanik
\cite{dayanik2008optimal}, Dayanik and Egami \cite{dayanik2012optimal}, Dayanik and Karatzas \cite{dayanik2003optimal}, D{\'e}camps and Villeneuve \cite{decamps2007optimal}, Johnson \cite{johnson2015solution}, Johnson and Zervos \cite{johnson2007solution}, Lamberton and Zervos \cite{lamberton2013optimal}, Villeneuve \cite{villeneuve2007threshold}.

The paper is organized as follows: in Section 2, we present the most relevant assumptions on the profit function and the model's parameters, in Section 3, we characterize the optimal stopping strategy and, in Section 4, we provide the final result, presenting the solution to the problem. In Section 5, we study the behaviour of the stopping strategy with respect to the diffusion parameters $\alpha$ and $\sigma$. We illustrate the results derived from this paper, considering particular instances of a class of profit functions $\Pi$ in section 6 and, finally, there are two appendices with technical results.

\section{Problem set up}\label{model}
In this section we present the assumptions regarding the running function $\Pi$ and the parameters of the stochastic process $X$, $\alpha$ and $\sigma$. We consider the complete probability space $(\Omega,{\cal F},P)$, equipped with the filtration $\{{\cal F}_t\}_{t\geq 0}$ generated by the Brownian motion $W=\{W(t), t \geq 0\}$. Finally, $S$ is the set of all admissible $({\cal F}_t)$-stopping times. 

We consider an infinitesimal generator associated to the GBM, $X$, hereby denoted by  ${\cal L}$, given by:
\begin{equation}\label{infinitesimal generator GBM}
{\cal L}\phi(x):=\lim_{t\downarrow 0}\frac{E_x\left[e^{-rt}\phi(X(t))-\phi(x)\right]}{t}=-r\phi(x)+\alpha x\phi'(x)+\frac{\sigma^2}{2}x^2\phi(x),
\end{equation}
where $\phi$ has enough regularity properties. For future reference, we note that considering the change of variable  $t=ln\left(\frac{x}{x^*}\right)$ and $u(t)=\phi(e^t)$, where $x^*\in\mathbb{R}$, we can re-write the infinitesimal generator as:
\begin{equation}\label{modified infinitesimal generator}
{\cal \tilde{L}}u(t)=-ru(t)+\left(\alpha-\frac{\sigma^2}{2}\right)u'(t)+\frac{\sigma^2}{2}u''(t).
\end{equation}
The corresponding characteristic polynomial is given by:
\begin{equation}\label{characteristic polynomial}
P(d)=\frac{\sigma^2}{2}d^2+\left(\alpha-\frac{\sigma^2}{2}\right)d-r,
\end{equation}
and its roots are:
\begin{align}\label{roots}
{d_1}:=\frac{\left(\frac{\sigma^2}{2}-\alpha\right)-\sqrt{\left(\frac{\sigma^2}{2}-\alpha\right)^2+2\sigma^2r}}{\sigma^2}\quad\text{and}\quad {d_2}:=\frac{\left(\frac{\sigma^2}{2}-\alpha\right)+\sqrt{\left(\frac{\sigma^2}{2}-\alpha\right)^2+2\sigma^2r}}{\sigma^2}.
\end{align}
Therefore,
\begin{equation}\label{reparametrization}
r=-\frac{\sigma^2}{2}d_1d_2\quad \text{and} \quad \alpha=\frac{\sigma^2}{2}(1-d_1-d_2).
\end{equation}
Note that we may use either $(r,\alpha,\sigma^2)$ as natural parameters of the model or $(d_1,d_2,\sigma^2)$ as, in view of \eqref{reparametrization}, they are equivalent.
The reason to use $d_1$ and $d_2$ as parameters is that some of the assumptions that we need to consider, in order to have a non-trivial optimisation problem, are expressed in terms of these quantities. Now, we introduce the assumptions over $\Pi$, which will be further considered.


%
\begin{assumption}\label{A3}
The Borel measurable function $\Pi:]0,\infty[\to\mathbb{R}$ is such that $\Pi \in L^1_{loc}(]0,\infty[)$.
\end{assumption}
It follows trivially from the definition of the optimisation problem that: if  $\Pi(x)\geq 0$ for all $x\in ]0,\infty[$, then the optimal stopping time is  $\tau=\infty$  and, therefore, $V(x)=J(x,\infty)$. Additionally, if $\Pi(x)\leq 0$ for all $x\in ]0,\infty[$, then the optimal stopping time is zero and, thus, $V(x)=J(x,0)=0$. In these cases, we say that the optimisation problem is trivial. Also note that:
\begin{itemize}
\item  if $\tau$ is such that $E_x\left[\int_0^\tau e^{-rs}\Pi(X(s))\right]>0$ for all $x$, then
the optimal stopping time is larger or equal to $\tau$ w.p.1. (due to the continuity of the integral);
\item  if there exists $\tau$ such that $E_x\left[\int_0^\tau e^{-rs}\Pi(X(s))\right]=\infty$, then the optimal stopping time is $\tau=\infty$, and the problem is trivial.
\end{itemize}
Therefore, in addition to assumption \ref{A3}, we assume the following:
\begin{assumption}\label{A2}
$v_p^+(x):=E_x\left[\int_0^\infty e^{-rs}\Pi^+(X(s))ds\right]<\infty$\footnote{We use the following notation:  given a function $f$ we denote by $f^+=max(f,0)$ and $f^-=\max(0,-f)$.}, for all $x\in ]0,\infty[$.
\end{assumption}
Note that assumption \ref{A2} allows to have $v_p^-(x):=E_x\left[\int_0^{\infty}e^{-rs}\Pi^-(X(s))ds\right]=\infty$, which relaxes, for example, the correspondent assumption proposed by Knudsen, Meister and Zervos \cite{knudsen1998valuation}: 
\begin{equation}\label{usual}
E_x\left[\int_0^{\infty}e^{-rs}\vert \Pi(X(s)\vert ds\right]<\infty.
\end{equation}
However, as we will see later, this
does not change the regularity of the value function $V$. In the following proposition we present conditions on the parameters that should be avoided, in order to have a well defined and a non-trivial optimisation problem. 

\begin{prop}\label{P1} 
Let $X$ be defined as in \eqref{GBM} and $g:]0,\infty[\to[0,+\infty]$ a Borel measurable function, such that $\int_0^{\infty}g(x)dx\in ]0,\infty]$. If $d_1=d_2$ or $d_1=\overline{d_2}\in\mathbb{C}\setminus\mathbb{R}$, then 
\begin{equation*}
E_x\left[\int_0^{\infty}e^{-rt}g(X(t))dt\right]=\infty.
\end{equation*}
\end{prop}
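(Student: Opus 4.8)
The plan is to reduce the problem to the resolvent of a Brownian motion with drift and to show that the relevant resolvent kernel is identically $+\infty$ precisely in the two excluded cases. Writing $Y(t)=\ln X(t)$, it follows from \eqref{GBM} that $Y$ is a Brownian motion with drift $\mu=\alpha-\frac{\sigma^2}{2}$ and variance parameter $\sigma^2$, started at $y_0=\ln x$, with transition density
\begin{equation*}
p_t(y_0,y)=\frac{1}{\sqrt{2\pi\sigma^2 t}}\exp\left(-\frac{(y-y_0-\mu t)^2}{2\sigma^2 t}\right).
\end{equation*}
Setting $h(y)=g(e^y)\ge 0$ and applying Tonelli's theorem (everything is nonnegative) to interchange the expectation, the time integral, and the spatial integral, I would rewrite the quantity of interest as
\begin{equation*}
E_x\left[\int_0^\infty e^{-rt}g(X(t))\,dt\right]=\int_{-\infty}^{\infty}h(y)\,R_r(y_0,y)\,dy,\qquad R_r(y_0,y):=\int_0^\infty e^{-rt}p_t(y_0,y)\,dt,
\end{equation*}
so that the statement reduces to proving $R_r(y_0,y)=+\infty$ for every $y$, together with the observation that $\int_0^\infty g(x)\,dx>0$ forces $h>0$ on a set of positive Lebesgue measure.

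Next I would translate the hypothesis into a condition on the large-time decay of $e^{-rt}p_t$. Since $\frac{\sigma^2}{2}-\alpha=-\mu$, the quantity under the square root in \eqref{roots} equals $\mu^2+2\sigma^2 r$, and setting $\lambda:=r+\frac{\mu^2}{2\sigma^2}$ one has the identity $\mu^2+2\sigma^2 r=2\sigma^2\lambda$. Hence $d_1=d_2$ (a real double root) corresponds exactly to $\lambda=0$, and $d_1=\overline{d_2}\in\mathbb{C}\setminus\mathbb{R}$ (complex conjugate roots) corresponds exactly to $\lambda<0$; in both excluded cases $\lambda\le 0$.

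To show $R_r(y_0,y)=+\infty$, I would expand the Gaussian exponent as
\begin{equation*}
-rt-\frac{(y-y_0-\mu t)^2}{2\sigma^2 t}=\frac{\mu(y-y_0)}{\sigma^2}-\lambda t-\frac{(y-y_0)^2}{2\sigma^2 t},
\end{equation*}
so that for $t\ge 1$ the factor $\exp\left(-\frac{(y-y_0)^2}{2\sigma^2 t}\right)$ is bounded below by the positive constant $\exp\left(-\frac{(y-y_0)^2}{2\sigma^2}\right)$, giving a lower bound of the form $e^{-rt}p_t(y_0,y)\ge \frac{C}{\sqrt{t}}\,e^{-\lambda t}$ on $[1,\infty)$ for some constant $C=C(y,y_0)>0$. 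Since $\int_1^\infty t^{-1/2}e^{-\lambda t}\,dt=+\infty$ whenever $\lambda\le 0$, this yields $R_r(y_0,y)=+\infty$ for every $y$, and the conclusion follows because $h$ is strictly positive on a set of positive measure.

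I expect the main obstacle to be bookkeeping rather than conceptual: the conceptual heart is the identity $\mu^2+2\sigma^2 r=2\sigma^2\lambda$, which identifies the sign of the discriminant with the sign of $\lambda$, i.e.\ with the large-$t$ exponential rate of $e^{-rt}p_t$. The only points requiring care are the justification of the Tonelli interchanges (immediate from nonnegativity) and the passage from ``$R_r\equiv+\infty$'' to the divergence of $\int h\,R_r\,dy$, which uses that $g$, hence $h$, is positive on a set of positive measure; note also that the root structure tacitly presupposes $\sigma\neq 0$.
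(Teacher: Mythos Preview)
Your proof is correct and follows essentially the same approach as the paper: both use Tonelli to reduce to a time integral of the Gaussian transition density of $\ln X(t)$, bound the term $e^{-(y-y_0)^2/(2\sigma^2 t)}$ from below on $t\ge 1$, and obtain a lower bound of the form $C\int_1^\infty t^{-1/2}e^{-\lambda t}\,dt$ with $\lambda\le 0$ in the two excluded cases. The only difference is packaging: you phrase the argument via the resolvent kernel $R_r$ and the parameter $\lambda=r+\mu^2/(2\sigma^2)$, whereas the paper performs the same computation directly in the $(d_1,d_2)$ parametrization, arriving at the exponent $\frac{\sigma^2}{2}(|d_1|^2-\mathrm{Re}(d_1)^2)t$, which is your $-\lambda t$.
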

\begin{proof}[Proof of Proposition \ref{P1}]
Using Fubini's theorem, we have:
\begin{align*}
E_x\left[\int_0^{\infty}e^{-rt}g(X(t))dt\right]=&\int_0^{\infty}e^{-rt}E\left[g\left(X(t)\right)\right]dt\\
=&\int_0^{\infty}e^{\frac{\sigma^2}{2}d_1d_2t}\int_{-\infty}^{\infty}g\left(xe^{-\frac{\sigma^2}{2}\left(d_1+d_2\right)t+\sigma w}\right)\frac{e^{-\frac{w^2}{2t}}}{\sqrt{2\pi t}}dwdt,
\end{align*}
where we took into account the parametrization given by \eqref{reparametrization}. Now, using the change of variable $w=\frac{1}{\sigma}\log\frac{y}{x}+\sigma\frac{d_1+d_2}{2}t$, where we assume, without loss of generality, that $\sigma>0$, it is a matter of calculations to observe that: 
\begin{align*}
E_x\left[\int_0^{\infty}e^{-rt}g(X(t))dt\right]\geq A\int_1^{+\infty}\frac{1}{\sqrt{2\pi t}}e^{\frac{\sigma^2}{2}\left(\vert d_1\vert^2-Re(d_1)^2\right)}dt,
\end{align*}
where $A=\int_0^{\infty}\frac{g(y)}{\sigma y}e^{-\frac{1}{2\sigma^2}\log\left(\frac{y}{x}\right)^2-\frac{d_1+d_2}{2}\log\left(\frac{y}{x}\right)}dy>0$ and $Re(d_1)$ represents the real part of $d_1$, which is sufficient to get the intended result.
\end{proof}
Therefore, in view of the last proposition and assumption \ref{A2}, we assume that $d_1$ and $d_2$ are real and different. Consequently, the cases (i) $d_1,d_2\in\mathbb{C}\setminus\mathbb{R}$ and (ii) $d_1,d_2\in\mathbb{R}~ \text{with}~ d_1= d_2$ will not be analysed, since these cases lead to optimisation problems that are trivial or ill-posed.  We remark that negative discount rates (interest rates) can still be considered, as long as 
\begin{equation}\label{limite inferior par r}
r>-\frac{1}{2\sigma^2}\left(\frac{\sigma^2}{2}-\alpha\right)^2.
\end{equation}
For future reference, we make the following remark:
\begin{remark}\label{l1}
According to Knudsen, Meister and Zervos \cite{knudsen1998valuation} (proposition 4.1), for a Borel measurable function $g:]0,\infty[\to]0,\infty[$, the following condition holds:
\begin{equation}\label{condicao zervos}
 E_x\left[\int_0^{\infty}e^{-rs} g(X(s)) ds\right]<\infty,
\end{equation}
if, and only if,  
$x\to x^{-d_1-1}g(x)\in L^1(]0,w[) $
and
$x\to x^{-d_2-1}g(x)\in L^1(]w,\infty[),$
for all $w\in ]0,\infty[$ and $d_1 \neq d_2$. Moreover,  if $\liminf_{x\downarrow 0} \frac{\vert\Pi(x)\vert}{x^{d_1}}\neq 0$ or $\liminf_{x\to \infty} \frac{\vert\Pi(x)\vert}{x^{d_2}}\neq 0$, then $v_p^+(x)=\infty$, for all $x\in]0,\infty[$.
\end{remark}
Henceforward, we will consider running functions $\Pi$ with the following characteristics: 
\begin{assumption}\label{A1}
The function $\Pi$ is such that 
\begin{equation}\label{ruschendorf scheme}
\Pi(x)\begin{cases}
>0,& \quad  \text{for all }x \in ]x_{1r},x_{2l}[\\
=0,& \quad \text{for all }x\in [x_{1l},x_{1r}]\cup[x_{2l},x_{2r}]\\
<0,&\quad \text{for all }x\in ]0,x_{1l}[\cup]x_{2r},\infty[
\end{cases},
\end{equation}
 with $0\leq x_{1l}\leq x_{1r}<x_{2l}\leq x_{2r}<\infty$ or $0< x_{1l}\leq x_{1r}<x_{2l}\leq x_{2r}\leq \infty$, where we adopt the usual convention $]a,a[=\emptyset$, for all $a\in[0,\infty]$.
\end{assumption}
This class of functions is, in fact, quite wide. In particular, it includes functions that are neither monotonic nor continuous. For instance, 
R\"{u}schendorf and Urusov \cite{ruschendorf2008class} have already considered functions $\Pi$ such that  $0<x_{1l}\leq x_{1r}< x_{2l}\leq x_{2r}<\infty$,  motivated by problems related to Asian options. Regarding applications coming from real options, we can cite, for example, Dixit and Pyndick \cite{DixitPindyck}, Guerra, Nunes and Oliveira \cite{nossoartigo} or Knudsen, Meister and Zervos \cite{knudsen1998valuation}, which analyse the optimal stopping problem for running functions satisfying $0<x_{1l}\leq x_{1r}< x_{2l}\leq x_{2r}=\infty$.

In the following sections, we derive a representation of the value function $V$ for this class of functions.

\section{The optimal strategy}

In this section, we characterize the optimal stopping strategy, $\tau$, for the problem \eqref{optimal00}, under the assumptions presented in the previous sections. For particular running functions, this characterization is obtained by finding the stopping and continuation regions. 
 
We follow the usual approach, namely we start by solving the Hamilton-Jacobi-Bellman (HJB) equation.  As Peskir and Shiryaev \cite{Shiryaev} refer, this is a free-boundary problem. Indeed, there is not a unique function that solves the HJB equation, but instead a family of solutions. However, by using a proper verification theorem, one can completely characterize the unique solution of the optimal stopping problem. 

Since the assumptions considered in section \ref{model} are quite general and different to the ones presented in other works (as the works cited until now), we present a general and suitable verification theorem which give sufficient conditions over the value function, $V$, in order to solve the free-boundary problem above mentioned. Particularly, although we just require \ref{A2} (and, therefore, one may have $v_p^-(x)=\infty$), as we will see later on, this does not affect the regularity of the optimal solution $V$. Notably, the value function $V$ is still continuously  differentiable, with absolutely continuous derivative. We note that in previous works, as  Knudsen, Meister and Zervos \cite{knudsen1998valuation} or R\"{u}schendorf and Urusov \cite{ruschendorf2008class}, a similar result holds (although the conditions that they assume are not the same as ours).

The verification theorems provided by Knudsen, Meister and Zervos \cite{knudsen1998valuation} and by R\"{u}schendorf and Urusov \cite{ruschendorf2008class} are quite general and partially cover the verification theorem presented in this section. Anyhow, we still need to present a suitable verification theorem, as Knudsen, Meister and Zervos \cite{knudsen1998valuation} do not consider  the possibility of having $v_p^-(x)=\infty$ for all $x\in]0,\infty[$, and R\"{u}schendorf and Urusov \cite{ruschendorf2008class}  present a verification theorem only for the case when $0<x_{1l}\leq x_{1r}< x_{2l}\leq x_{2r}<\infty$ in the assumption \ref{A1}.

\subsection{Verification theorem}\label{secao teorema da verificacao}

%
%

%

\begin{teo}\label{teorema da verificacao}
Consider the optimal stopping problem defined   in \eqref{GBM}-\eqref{optimal00}, and the assumptions \ref{A3}-\ref{A2}. Let $v:]0,\infty[\to\mathbb{R}^+$ be a solution to the HJB equation
\begin{equation}\label{HJB}
\min\left\{-{\cal L}v(x)-\Pi(x),v(x)\right\}=0,
\end{equation}
where ${\cal L}$ is given in \eqref{infinitesimal generator GBM}. If $v$ is such that $v\in C^1(]0,\infty[)$, $v'\in AC(]0,\infty[)$ and 
\begin{equation}\label{condi??o auxiliar}
\lim_{t\to\infty}e^{-rt}E_x\left[v(X(t)){\cal I}_{\{\tau_0>t\}}\right]=0,\text{ where } \tau_0=\inf\{t>0:v(X(t))=0\},
\end{equation}
then, (i) the value function $V$ is given by $V(x)=v(x)$ for all $x\in ]0,\infty[$, (ii) the optimal strategy is $\tau^*=\tau_0$ and (iii) the stopping and continuation regions are $D=\{x>0:V(x)=0\}$ and $D^c=\{x>0:V(x)>0\}$, respectively.
\end{teo}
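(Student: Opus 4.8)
The plan is to establish the two inequalities $v(x)\ge V(x)$ and $v(x)\le V(x)$ separately: the first by showing that $v$ dominates $J(x,\tau)$ for every admissible $\tau$, the second by exhibiting $\tau_0$ as a stopping time that attains $v(x)$. The engine for both is Dynkin's formula applied to the discounted process $e^{-rt}v(X(t))$. Because $v\in C^1(]0,\infty[)$ with $v'\in AC(]0,\infty[)$, the second derivative $v''$ exists Lebesgue-almost everywhere and is locally integrable, so I would invoke the generalized It\^o formula valid for such functions, obtaining
\begin{equation*}
e^{-rt}v(X(t))=v(x)+\int_0^t e^{-rs}\mathcal{L}v(X(s))\,ds+\int_0^t e^{-rs}\sigma X(s)v'(X(s))\,dW(s),
\end{equation*}
where $\mathcal{L}v$ is read through the almost-everywhere second derivative, consistent with \eqref{infinitesimal generator GBM}.

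For the upper bound, the HJB equation \eqref{HJB} yields $v\ge 0$ and $\mathcal{L}v(x)\le -\Pi(x)$ for (almost) every $x$. I would first localize the stochastic integral with a sequence $\rho_n\uparrow\infty$ reducing it to a true martingale, so that taking expectations at time $\tau\wedge t\wedge\rho_n$ annihilates the martingale term; using $v\ge 0$ to discard the nonnegative boundary contribution then gives $v(x)\ge E_x[\int_0^{\tau\wedge t\wedge\rho_n}e^{-rs}\Pi(X(s))\,ds]$. Letting $n\to\infty$ and then $t\to\infty$, I would pass to the limit by splitting $\Pi=\Pi^+-\Pi^-$ and applying monotone convergence to each part; Assumption \ref{A2} guarantees that the positive part stays bounded by $v_p^+(x)<\infty$, so the limit is exactly $J(x,\tau)$ and $v(x)\ge J(x,\tau)$ for every $\tau\in S$, whence $v(x)\ge V(x)$.

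For the reverse inequality and the optimality of $\tau_0$, I would repeat the computation with $\tau=\tau_0$. On the stochastic interval $[0,\tau_0)$ one has $v(X(s))>0$, so the second branch of \eqref{HJB} forces $\mathcal{L}v(X(s))+\Pi(X(s))=0$ there, turning the previous inequality into the identity
\begin{equation*}
v(x)=E_x\!\left[\int_0^{\tau_0\wedge t}e^{-rs}\Pi(X(s))\,ds\right]+E_x\!\left[e^{-r(\tau_0\wedge t)}v(X(\tau_0\wedge t))\right].
\end{equation*}
The boundary term splits over $\{\tau_0\le t\}$ and $\{\tau_0>t\}$: on the former the continuity of $v$ and the definition of $\tau_0$ give $v(X(\tau_0))=0$, while the latter is precisely $e^{-rt}E_x[v(X(t))\mathcal{I}_{\{\tau_0>t\}}]$, which vanishes as $t\to\infty$ by the transversality condition assumed in the statement. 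Hence $v(x)=J(x,\tau_0)\le V(x)$, and together with the upper bound this gives $V=v$, optimality of $\tau^*=\tau_0$, and the identification of the regions $D=\{V=0\}$ and $D^c=\{V>0\}$ in (iii).

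The step I expect to be most delicate is the application of It\^o's formula under the weak regularity $v'\in AC$ rather than $v\in C^2$: one must argue that no local-time correction term appears. Since $v'$ is continuous there is no jump contribution, and because the occupation measure of the GBM on $]0,\infty[$ is absolutely continuous with respect to Lebesgue measure, the almost-everywhere representative of $v''$ produces a well-defined integral $\int_0^t v''(X(s))\,d\langle X\rangle_s$ that is independent of the chosen version; this is what legitimizes writing $\mathcal{L}v$ in the formula. A secondary technical point is the careful interchange of limits in the localization and monotone-convergence argument, where the possibility $v_p^-(x)=\infty$ must be accommodated by treating $\Pi^+$ and $\Pi^-$ separately rather than through a single dominated-convergence bound.
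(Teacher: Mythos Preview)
Your proposal is correct and follows essentially the same route as the paper: apply a generalized It\^o/It\^o--Tanaka formula to $e^{-rt}v(X(t))$, use the HJB inequalities together with monotone convergence on $\Pi^{\pm}$ separately to obtain $v(x)\ge J(x,\tau)$ for every $\tau$, and then specialize to $\tau_0$, where the HJB holds as an equality and the transversality condition \eqref{condi??o auxiliar} kills the boundary term. Your explicit localization of the stochastic integral and your discussion of why no local-time correction appears are in fact more careful than the paper's treatment, which simply asserts the martingale property from continuity of $v'$ and cites the It\^o--Tanaka formula of Revuz--Yor and Ghomrasni--Peskir.
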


\begin{proof}
Let $v$ be a solution of the HJB equation, such that $v\in C^1(]0,\infty[)$, with $v'\in AC(]0,\infty[)$.
Then, as $v$ is a solution of the HJB, it follows that:
\begin{equation}\label{varational inequalities}
-{\cal L}v(x)=rv(x) - \alpha x v^\prime (x) - \frac{1}{2}\sigma^2x^2 v^{\prime \prime}(x) \geq \Pi(x),
\quad\text{and}\quad
v(x) \geq 0,
\end{equation}
for all $x \in ]0,\infty[$, and at least one of the inequalities is verified as an equality.

Fix $t>0$. Since the function $v'$ is continuous in $]0,\infty[$, then $\int_0^{t\wedge\tau}e^{-rs}v'(X(s))dW(s)$\footnote{$a \wedge b=\min(a,b)$.} is a martingale and, consequently:
\begin{equation}
E_x\left[\int_0^{t\wedge\tau}e^{-rs}v'(X(s))dW(s)\right]=0, \quad \text{for all  }x>0, \tau \in {\cal S}.
\end{equation}
Using a It\^o-Tanaka formula (see, for example Revuz and Yor \cite{revuz2013continuous}, or Ghomrasni and Peskir \cite{ghomrasni2004local}) and 
 the inequalities from \eqref{varational inequalities}, it follows that
\begin{align}
0 &\leq E_x\Big[ e^{-r(t\wedge \tau)}  v(X(t\wedge\tau)) \Big] \nonumber\\&=\nonumber
v(x)-\nonumber
E_x \left[ \int_0^{t\wedge \tau} e^{-rs}\left( r v(X(s)) - \alpha X(s)v^\prime (X(s)) - \frac{1} 2 \sigma^2 X^2(s) v^{ \prime \prime}(X(s)) \right) ds \right]
\\ &\leq 
v(x) - E_x \left[ \int_0^{t\wedge \tau} e^{-rs} \Pi(X(s)) ds \right], \label{inequality ito}
\end{align}
and, therefore, we obtain, for all $\tau\in{\cal S}$:
\begin{align}\label{monotonic convergence theorem}
v(x) \geq E_x \left[ \int_0^{t\wedge \tau} e^{-rs} \Pi^+(X(s)) ds \right]-E_x \left[ \int_0^{t\wedge \tau} e^{-rs} \Pi^-(X(s)) ds \right]. 
\end{align}
Since both  $\{\int_0^{t\wedge \tau} e^{-rs} \Pi^+(X(s)) ds\}_{t\geq 0}$ and $\int_0^{t\wedge \tau} e^{-rs} \Pi^-(X(s)) ds\}_{t\geq 0}$ are non-decreasing sequences of measurable functions (with probability 1) and $E_x \left[ \int_0^{t\wedge \tau} e^{-rs} \Pi^+(X(s)) ds \right]<\infty$, then, by using  the monotonic convergence theorem, it follows that, for all $\tau\in {\cal S}$:
\begin{equation}
\label{Jx}
v(x) \geq E_x \left[ \int_0^{\tau} e^{-rs} \Pi(X(s)) ds \right]=J(x,\tau).
\end{equation}
Thus, next we prove that there exists one stopping time for which the equality in \eqref{Jx} occurs if \eqref{condi??o auxiliar} holds. 
For that purpose, let $\tau_0$ be defined as in \eqref{condi??o auxiliar}; then, with similar arguments as the ones used in order to derive \eqref{monotonic convergence theorem}, it follows that:
\begin{equation}\label{auxiliar-2}
E_x\left[\int_0^{t \wedge \tau_0} e^{-rs}\Pi(X(s)) ds\right]=v(x)-E_x\left[e^{-r{t}} v \left(X\left(t\right)\right){\cal I}_{\{\tau_0> t\}}\right].
\end{equation}
Since the condition \eqref{condi??o auxiliar} holds true, we obtain:
%
\begin{equation}\label{first limit}
\lim_{t\to\infty}E_x\left[\int_0^{t \wedge \tau_0} e^{-rs}\Pi(X(s)) ds\right]=v(x).
\end{equation} 
Therefore, it follows that the optimal value function is equal to $J(x,\tau_0)$ if, and only if, 
\begin{equation}\label{second limit}
\lim_{t\to\infty}E_x\left[\int_0^{\tau_0\wedge t} e^{-rs}\Pi(X(s)) ds\right]=E_x\left[\int_0^{\tau_0} e^{-rs}\Pi(X(s)) ds\right].
\end{equation} 
Finally, by using an argument similar to the one used to prove \eqref{Jx}, \eqref{second limit} is straightforward. Thus, the optimal stopping time is indeed $\tau_0$ and the continuation and stopping regions are as defined  in the statement of the theorem.
\end{proof}

\subsection{Study of the ODE}

According to the verification theorem \eqref{teorema da verificacao},  the value function $V$ in the continuation region will be given by the solution to the associated ordinary differential equation (ODE)
\begin{equation}\label{ode}
-{\cal L}v(x)-\Pi(x)=0,
\end{equation}
where ${\cal L}$ is given in \eqref{infinitesimal generator GBM}, taking into account some boundary conditions. Naturally, the solution to \eqref{ode} may not be classic. For example, if $\Pi$ is discontinuous (which may happen, according to our assumptions), then $v''$ is such that:
$$r\int\int v''(t)dt-\alpha x\int v''(t)dt-\Pi(x)=\frac{1}{2}\sigma^2x^2v''(x),
$$
and, consequently, $v''$  is also discontinuous. 
However, in view of Filippov \cite{filippov}, and under the assumption \ref{A3}, the solution of the ODE, $v$, is such that (i) $v\in C^1$, (ii) depends continuously on the initial condition, and (iii) $v'\in AC$ . Indeed $v$, with the described regularity, is a Caratheodory solution to the ODE \eqref{ode}.

In order to solve the ODE \eqref{ode}, we usually start by proposing a solution to the associated homogeneous equation and, afterwards, we find a particular solution,  $v_p(x)$. Under condition \eqref{usual}, Knudsen, Meister and Zervos \cite{knudsen1998valuation} proved the following probabilistic representation for the particular solution:
\begin{align}\label{particular function}
v_p(x)&=E_x\left[\int_0^{\infty}e^{-rs} \Pi(X(s)) ds\right]\nonumber\\
&=\frac{2}{\sigma^2(d_2-d_1)}\left[ x^{d_1}\int_0^{x}s^{-d_1-1}\Pi(s)ds+x^{d_2}\int_x^{\infty}s^{-d_2-1}\Pi(s)ds\right].
\end{align}
We refer also to Kobila \cite{kobila1993}  to complement the discussion about this representation, and Johnson and Zervos \cite{johnson2007solution} for similar results under more general diffusion processes.

In view of our assumptions regarding $\Pi$, condition \eqref{usual} does not need to hold. Therefore, one needs to derive the solution of the ODE using another approach, that we describe next. Let $x^* \in \mathbb{R}^+$ be a point such that  $v(x^*)$ is an initial condition for $v$ and consider the following change of variable:  $t=ln\left( \frac{x}{x^*} \right)$ and $u(t)=v(x^*e^t)$. Then, finding a solution to \eqref{ode} is equivalent to finding a solution to the equation
\begin{equation}\label{ode_modified}
-{\cal\tilde{ L}}u(t)-\Pi(e^t)=0,
\end{equation}
where ${\cal\tilde{ L}}$ is defined as in \eqref{modified infinitesimal generator}.
Now, defining the vector $w(t)=(u(t), u'(t))^T $
where the symbol $^T$ denotes the transpose, we may represent the ODE \eqref{ode_modified} as  $w'(t)=Aw(t)+b(t)$, where $b:]0,\infty[\to\mathbb{R}^{2\times 1}$ is a vector function and $A$ is a constant $2 \times 2$ matrix,  defined as follows:
\begin{equation}
b(t)= -\frac{2}{\sigma^2}\left(0, -\frac{2}{\sigma^2}\Pi(e^t)\right)^T; 
\quad
A=\left(\!
    \begin{array}{cc}
      0 & 1 \\
      \frac{2r}{\sigma^2}&-\frac{\sigma^2-2\alpha}{\sigma^2}
    \end{array}
  \!\right)=\left(\!
    \begin{array}{cc}
      0 & 1 \\
      -d_1d_2&d_1+d_2
    \end{array}
  \!\right).
  \label{ab}
  \end{equation}
The last equality follows from the parametrization defined in \eqref{reparametrization}. Furthermore, straightforward calculations lead to the fundamental matrix  
$$
e^{tA}=\left(\!
    \begin{array}{cc}
      \frac{d_2e^{td_1}-d_1e^{td_2}}{d_2-d_1} & \frac{e^{td_2}-e^{td_1}}{d_2-d_1} \\
      -d_1d_2\frac{e^{td_2}-e^{td_1}}{d_2-d_1}&\frac{d_2e^{td_2}-d_1e^{td_1}}{d_2-d_1}
    \end{array}
  \!\right).
$$
The solution for this system is given by $w(t)=e^{tA}w_0-\frac{2}{\sigma^2}\int_0^te^{(t-s)A}b(s)ds$, where $w_0=w(0)$ represents the initial condition. Returning to the original variables:
\begin{align}\label{v}
&v(x)=\frac{-2}{\sigma^2(d_2-d_1)}\left(\left(\frac{x}{x^*}\right)^{d_1}(d_2A_1-A_2)+\left(\frac{x}{x^*}\right)^{d_2}(A_2-d_1A_1)+\int_{x*}^x\frac{\left(\frac{x}{s}\right)^{d_2}-\left(\frac{x}{s}\right)^{d_1}}{s}\Pi(s)ds\right)\\\label{v'}
&v'(x)=\frac{-2x^{-1}}{\sigma^2(d_2-d_1)}\left(\left(\frac{x}{x^*}\right)^{d_1}(d_2A_1-A_2)d_1+\left(\frac{x}{x^*}\right)^{d_2}(A_2-d_1A_1)d_2+\int_{x*}^x\frac{\left(\frac{x}{s}\right)^{d_2}-\left(\frac{x}{s}\right)^{d_1}}{s}\Pi(s)ds\right),
\end{align}
where $A_1=\frac{\sigma^2}{2}v(x^*)$ and $A_2=\frac{\sigma^2}{2}v'(x^*)$.

\begin{remark}\label{solution to the ode}
When $x^*=X(\tau^*)$, where $\tau^*$ is defined in the verification theorem \eqref{teorema da verificacao}, then it follows that $A_1=A_2=0$, and, therefore, we guess that the solution to the ODE \eqref{ode}, $v$, is given by:
\begin{align}  
    \label{v and v'}
 \left(\!
    \begin{array}{c}
      v(x) \\
      v'(x)
    \end{array}\!\right) &=\frac{-2}{\sigma^2(d_2-d_1)}\left(\!
    \begin{array}{c}
      \int_{x^*}^x\frac{\left(\frac{x}{s}\right)^{d_2}-\left(\frac{x}{s}\right)^{d_1}}{s}\Pi(s)ds \\
     \frac{1}{x} \int_{x^*}^x\frac{d_2\left(\frac{x}{s}\right)^{d_2}-d_1\left(\frac{x}{s}\right)^{d_1}}{s}\Pi(s)ds
    \end{array}
  \!\right).
\end{align} 
\end{remark}

%
%
%
%
%

\section{Solution to the optimal stopping problem}\label{Solution of the Optimal Stopping Time}

In this section we provide a closed form analytic solution to the optimal stopping time problem \eqref{optimal00}, under assumptions \ref{A3}-\ref{A1}.
To ease the presentation, first we  discuss important aspects about the main result of this section, and, afterwards, we provide it.

According to the comments in the previous sections, we expect that the solution of \eqref{optimal00} is given by:
\begin{equation}\label{value function}
V(x)=\begin{cases}
0, & \quad x\in {\cal D}_{\Pi}\\
\frac{-2}{\sigma^2(d_2-d_1)}\int_{x^*}^x\frac{\left(\frac{x}{s}\right)^{d_2}-\left(\frac{x}{s}\right)^{d_1}}{s}\Pi(s)ds & \quad x\in {\cal D}^c_{\Pi}
\end{cases}.
\end{equation}
In \eqref{value function}, ${\cal D}_{\Pi}$ is the stopping region and ${\cal D}^c_{\Pi}$ is the continuation region. We use the notation ${\cal D}_{\Pi}$ and ${\cal D}^c_{\Pi}$ to emphasize that the unknown sets are strictly related to $\Pi$. Henceforward, we will use ${\cal D}$ instead of ${\cal D}_\Pi$ and ${\cal D}^c$ instead of ${\cal D}^c_\Pi$ to ease the notation.

In view of the theorem \ref{teorema da verificacao}, if $V$ is the solution of \eqref{optimal00},  then it is also a continuous differentiable solution to the corresponding HJB equation \eqref{HJB}, with absolute continuous derivative. Therefore, in order to meet these conditions, one uses the so-called \textit{pasting conditions}, from which one intends to find a unique $V$ and, consequently, unique ${\cal D}$ and ${\cal D}^c$. 

Additionally, the regions ${\cal D}$ and ${\cal D}^c$ may have different forms, depending on the shape of the function $\Pi$. In general, they are also unknown. For instance, if $\Pi$ is monotonically increasing, we guess that ${\cal D}=]0,x^*]$, for some $x^*>0$. However, if $\Pi$ is monotonically decreasing, the stopping region should be given by ${\cal D}=[x^*,\infty[$, for some $x^*>0$. 
As mentioned in the previous section, such problems are called {\em free boundary problems}. One of the techniques used to solve such problems is based on truncation methods as Guerra, Nunes and Oliveira \cite{nossoartigo} propose. Next, we present the definition of $x^*$ according to the class of profit functions $\Pi$ that we are using. Before presenting the result, we remark that the type of relations that we find in definition \eqref{def of x*} are also found when one uses truncation methods.
\begin{defi}\label{def of x*}
Let $x^*$ be such that $x^*\equiv x^*_\Pi\in[0,\infty]$, where $\Pi$ is a running function and where ${\cal D},{\cal D}^c\subset]0,\infty[$ are two families of regions, such that ${\cal D}_{x^*}\cap{\cal D}^c_{x^*}=\emptyset$ and ${\cal D}_{x^*}\cup{\cal D}^c_{x^*}=]0,\infty[$. For each function $\Pi$, we define $x^*$ as follows:
\begin{itemize}
\item[a)] if, in assumption \ref{A1}, $0<x_{1l}<x_{2r}=\infty$, then 
\begin{equation}\label{gamma condition}
x^*=\gamma=\inf\left\{x>0:\exists C>x, \int_x^C\frac{\left(\frac{C}{s}\right)^{d_2}-\left(\frac{C}{s}\right)^{d_1}}{s}\Pi(s)ds\geq 0\right\} \text{ and } {\cal D}=]0,\gamma],\text{ } {\cal D}^c=]\gamma,\infty[;
\end{equation}
 \item[b)] if, in assumption \ref{A1}, $0=x_{1l}<x_{2r}<\infty$, then 
\begin{equation}\label{zeta condition}
x^*=\zeta=\sup\left\{x>0:\exists C\in]0,x[, \int_C^x\frac{\left(\frac{C}{s}\right)^{d_2}-\left(\frac{C}{s}\right)^{d_1}}{s}\Pi(s)ds\geq 0\right\} \text{ and }  {\cal D}=[\zeta,\infty[,\text{ } {\cal D}^c=]0,\zeta[;
\end{equation}
\item[c)] if, in assumption \ref{A1}, $0<x_{1l}<x_{2r}<\infty$,
\begin{align}\label{delta beta condition}
\begin{cases}&\delta=\inf\left\{x\in]0,\beta[: \int_x^\beta\frac{\left(\frac{\beta}{s}\right)^{d_2}-\left(\frac{\beta}{s}\right)^{d_1}}{s}\Pi(s)ds\geq 0\right\}\\
&\beta=\sup\left\{x>\delta:\int_\delta^x\frac{\left(\frac{\delta}{s}\right)^{d_2}-\left(\frac{\delta}{s}\right)^{d_1}}{s}\Pi(s)ds\geq 0\right\}\\
&{\cal D}=]0,\delta]\cup[\beta,\infty[, \text{ }{\cal D}^c=]\delta,\beta[, \text{ }x^*=\delta \text{ or }x^*=\beta.\end{cases}
\end{align}
We note that, given the definition of $\delta$ and $\beta$, it is indifferent, in terms of $V$, which one we use as $x^*$.
 \end{itemize}
 \end{defi}
We notice that $\gamma,\zeta,\delta$ and $\beta$ are defined for  different functions. Therefore, for a fixed function $\Pi$ satisfying assumption \ref{A1}, just one of the definitions  \eqref{gamma condition}-\eqref{delta beta condition} makes sense.

Usually, we cannot guarantee to have $\gamma,\delta>0$ and $\zeta,\beta<\infty$, meaning that it may never be optimal to stop the process for any initial condition $x>0$. For instance, fixing $\alpha=0.3$, $\sigma^2=0.1$ and $r=-0.1$ and considering
\begin{equation*}
\Pi(x)=\begin{cases}
(x+1)(x-1),& x\leq 3\\
\frac{72}{x^2},& x>3,
\end{cases}
\end{equation*}
we have, $v_p^+(x)<\infty$ and $J(x,\infty)>0$, for all $x>0$. Consequently, we would have $\gamma=0$ and a continuation region given by ${\cal D}=]0,\infty[$.
Therefore, it would never be optimal to stop the process, and the value function would be given by
\begin{equation}\label{trivial value function}
 V(x)=J(x,\infty), \quad \text{for all }x>0.
\end{equation}
According to proposition 4.1 in Knudsen, Meister and Zervos \cite{knudsen1998valuation}, we have  $J(x,\infty)=v_p(x)$, with $v_p(x)$ defined as in \eqref{particular function}, if, in addition to assumption \ref{A2}, $v_p^-(x)<\infty$, for all $x\in]0,\infty[$.

Often, in this kind of stochastic models (see for example Knudsen, Meister and Zervos \cite{knudsen1998valuation}, Duckworth and  Zervos \cite{duckworth2000investment}), one represents the threshold $x^*$ as the unique solution to integral equations. Due to the possibility of having $\gamma=0$ or $\delta=0$ or $\zeta=\infty$ or $\beta=\infty$ this could not be true in our set-up. In the next proposition, we prove that, in some situations, the threshold $x^*$ may be defined by the following equations:
\begin{align}
\label{gamma eq}
& \int_{\gamma}^{\infty}s^{-d_2-1}\Pi(s)ds=0,\\
\label{zeta eq}
& \int_0^{\zeta}s^{-d_1-1}\Pi(s)ds=0,\\
\label{zeta, delta eq}
&\begin{cases} \int_\delta^\beta s^{-d_1-1}\Pi(s)ds=0,\\\int_\delta^\beta s^{-d_2-1}\Pi(s)ds=0.\end{cases}
 \end{align}
We emphasize that these equations make sense for different running functions $\Pi$, as one can see in definition \ref{def of x*}. 
\begin{prop}\label{x^* equation}
Let $x^*, \gamma, \zeta,\delta$ and $\beta$ be defined as in definition \ref{def of x*} and assume that $x^*=\gamma>0$ or $x^*=\zeta<\infty$ or both $x^*=\delta>0$ and $\beta<\infty$ hold true. Then, $x^*$ may be equivalently defined as the unique solution to one of the equations \eqref{gamma eq}-\eqref{zeta, delta eq}.
\end{prop}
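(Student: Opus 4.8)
The plan is to handle the three configurations of Definition~\ref{def of x*} in parallel, using that the representation \eqref{v and v'} with lower limit $x^*$ automatically encodes the smooth fit $v(x^*)=v'(x^*)=0$ at the \emph{finite} boundary $x^*$. On the continuation region I would write the candidate in the form $v(x)=v_p(x)+C_1x^{d_1}+C_2x^{d_2}$, with $v_p$ the particular solution \eqref{particular function}, and read off the constants by splitting $\int_{x^*}^x=\int_0^x-\int_0^{x^*}$ in the $d_1$-integral and $\int_{x^*}^x=\int_{x^*}^\infty-\int_x^\infty$ in the $d_2$-integral. This yields $C_1=-\frac{2}{\sigma^2(d_2-d_1)}\int_0^{x^*}s^{-d_1-1}\Pi(s)\,ds$ and $C_2=-\frac{2}{\sigma^2(d_2-d_1)}\int_{x^*}^\infty s^{-d_2-1}\Pi(s)\,ds$, so that \eqref{gamma eq}, \eqref{zeta eq} and \eqref{zeta, delta eq} are precisely the statements $C_2=0$, $C_1=0$, and $C_1=C_2=0$, respectively. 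Since $x^*>0$ and $x^*<\infty$ in the relevant cases, these one-sided integrals are finite by Assumption~\ref{A2} and Remark~\ref{l1}, even when $v_p^-\equiv\infty$ makes $v_p$ itself divergent.

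I would treat case c) first, as it is purely algebraic: the continuation region $]\delta,\beta[$ is bounded, so besides the automatic conditions at $\delta$ the candidate must also smooth-fit at the finite boundary $\beta$. Writing $I_j=\int_\delta^\beta s^{-d_j-1}\Pi(s)\,ds$, the requirements $v(\beta)=0$ and $v'(\beta)=0$ read $\beta^{d_2}I_2=\beta^{d_1}I_1$ and $d_2\beta^{d_2}I_2=d_1\beta^{d_1}I_1$; substituting the first into the second gives $(d_2-d_1)\beta^{d_1}I_1=0$, and $d_1\neq d_2$ forces $I_1=0$ and hence $I_2=0$, which is exactly \eqref{zeta, delta eq}. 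In cases a) and b) the second ``boundary'' sits at $+\infty$ (respectively $0$), and the role of the missing smooth-fit is played by the limit condition \eqref{condi??o auxiliar} of Theorem~\ref{teorema da verificacao}: the modes $x^{d_2}$ (case a) and $x^{d_1}$ (case b) are exactly the homogeneous solutions of the ODE that do not decay along the process killed at $\tau_0$. Using that $P(d_i)=0$ makes $e^{-rt}X(t)^{d_i}$ a martingale, one has $e^{-rt}E_x\left[X(t)^{d_i}{\cal I}_{\{\tau_0>t\}}\right]=x^{d_i}\tilde P_x^{(d_i)}(\tau_0>t)$, where $\tilde P^{(d_i)}$ is the $h$-transform by $h=x^{d_i}$; I would then show that \eqref{condi??o auxiliar} fails unless the coefficient of the offending mode vanishes, giving $C_2=0$ (case a) and $C_1=0$ (case b), i.e. \eqref{gamma eq} and \eqref{zeta eq}.

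It then remains to prove that each equation has a single root and that this root is the quantity defined variationally in \eqref{gamma condition}--\eqref{delta beta condition}. For uniqueness in case a) I would analyse $h_2(x):=\int_x^\infty s^{-d_2-1}\Pi(s)\,ds$, which is finite for every $x>0$: its derivative $-x^{-d_2-1}\Pi(x)$ and Assumption~\ref{A1} make it strictly increasing on $]0,x_{1l}[$, constant on $[x_{1l},x_{1r}]$, strictly decreasing on $]x_{1r},\infty[$, with $h_2(x_{1l})>0$ and $h_2(x)\to0$ as $x\to\infty$, so $h_2$ has exactly one zero, lying in $]0,x_{1l}[$ precisely when $\gamma>0$; cases b) and c) are symmetric, with $h_1(x):=\int_0^x s^{-d_1-1}\Pi(s)\,ds$. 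To match this root with the infimum in \eqref{gamma condition}, I would use the monotonicity of the candidate in its own boundary, $\partial_{x^*}v(x)=\frac{2}{\sigma^2(d_2-d_1)}\frac{(x/x^*)^{d_2}-(x/x^*)^{d_1}}{x^*}\Pi(x^*)$, whose sign on $]0,x_{1l}[$ is that of $\Pi(x^*)<0$: lowering the boundary below the zero of $h_2$ keeps the candidate strictly positive (so the defining set of \eqref{gamma condition} contains no such point), while raising it above drives $C_2<0$ and the candidate to $-\infty$, so the infimum is exactly the zero of $h_2$.

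The hard part will be the identification, and within it the control of the tail as $x\to\infty$ (or $x\to0$) across all admissible sign patterns of $d_1,d_2$. Because the paper allows $r<0$, one cannot rely on $d_1<0<d_2$ and the naive ``$x^{d_2}$ blows up'' asymptotic; the coefficient must be forced to zero through the killed-process identity above, which requires checking that $\tilde P_x^{(d_i)}(\tau_0=\infty)>0$ for the offending exponent so that a nonzero coefficient genuinely violates \eqref{condi??o auxiliar}. A second delicate point is that Assumption~\ref{A2} guarantees only $v_p^+<\infty$, so $h_1$ and $v_p$ may equal $-\infty$; I would therefore phrase every step through the one-sided integrals known to be finite --- the $s^{-d_2-1}\Pi$-integral near $\infty$ in case a), the $s^{-d_1-1}\Pi$-integral near $0$ in case b) --- and invoke monotone convergence, as in the proof of Theorem~\ref{teorema da verificacao}, to avoid manipulating divergent quantities.
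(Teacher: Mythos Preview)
Your handling of case~c) is essentially the paper's: both arguments reduce the two smooth-fit conditions at the finite endpoints $\delta,\beta$ to the $2\times2$ linear system $\beta^{d_2}I_2=\beta^{d_1}I_1$, $d_2\beta^{d_2}I_2=d_1\beta^{d_1}I_1$, whose nonsingularity (from $d_1\neq d_2$) forces $I_1=I_2=0$. Your uniqueness discussion via the monotonicity of $h_1,h_2$ is actually more explicit than the paper, which simply asserts uniqueness.

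For cases~a) and~b), however, your route is substantially different from the paper's and heavier than necessary. You propose to kill the coefficient $C_2$ (resp.\ $C_1$) by invoking the transversality condition~\eqref{condi??o auxiliar} of the verification theorem together with an $h$-transform argument for the killed process. The paper does something far more elementary: divide the defining integral in~\eqref{gamma condition} by $C^{d_2}$ and let $C\to\infty$, obtaining
\[
\lim_{C\to\infty}\Bigl(\int_{\gamma}^{C}s^{-d_2-1}\Pi(s)\,ds-C^{d_1-d_2}\int_{\gamma}^{C}s^{-d_1-1}\Pi(s)\,ds\Bigr)=0,
\]
and then applies Proposition~\ref{auxiliar para condicoes integrais}, which states exactly that the second term vanishes (this is where Assumption~\ref{A2} is used, via Remark~\ref{l1}). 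The argument for $\zeta$ is symmetric with $C\downarrow0$. No verification theorem, no killed process, no $h$-transform.

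Two concrete benefits of the paper's approach over yours. First, it avoids the logical-order issue you introduce: condition~\eqref{condi??o auxiliar} is a hypothesis to be \emph{verified} for the candidate, and in the paper's architecture that verification comes \emph{after} Proposition~\ref{x^* equation} (through Corollary~\ref{envelopes para v} and Lemmas~\ref{A.1}--\ref{A.2}); using it here to pin down $\gamma$ is circular unless you supply an independent proof that the candidate with boundary $\gamma$ satisfies it. Second, it handles the $r<0$ regime uniformly: you correctly note that when $d_2\le0$ the naive ``$C_2x^{d_2}\to-\infty$'' tail argument fails, and you reach for the $h$-transform as a repair; Proposition~\ref{auxiliar para condicoes integrais} sidesteps this entirely, since its conclusion $\lim_{x\to\infty}x^{d_1-d_2}\int_\nu^x s^{-d_1-1}\Pi(s)\,ds=0$ holds for every admissible pair $d_1<d_2$ and does not depend on the sign of either root. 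The decomposition $v=v_p+C_1x^{d_1}+C_2x^{d_2}$ that you use to motivate the constants is likewise unnecessary (and, as you observe, ill-defined when $v_p^-\equiv\infty$); the paper never writes it in this proof.
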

\begin{proof}
If $x^*=\gamma>0$ or $x^*=\zeta<\infty$, then $x^*$ may also be defined as the unique solution to
\begin{align*}
&\lim\limits_{C\to\infty}\left(\int_{\gamma}^{C}s^{-d_2-1}\Pi(s)ds-{C}^{d_1-d_2}\int_{\gamma}^{C}s^{-d_1-1}\Pi(s)ds\right)=0,\\
&\lim\limits_{C\downarrow 0}\left({C}^{d_2-d_1}\int_{C}^{\zeta}s^{-d_2-1}\Pi(s)ds-\int_{C}^{\zeta}s^{-d_1-1}\Pi(s)ds\right)=0.
\end{align*}
Moreover, these equations combined with the results in proposition \ref{auxiliar para condicoes integrais}, allow us to obtain \eqref{zeta eq}-\eqref{gamma eq}. Finally, assuming that $x^*$ is such that $\delta>0$ and $\beta<\infty$, then the intended result follows by solving a system of two equations.
\end{proof}
When $\delta$ and $\beta$ are such that $\delta>0$ and $\beta=\infty$ or $\delta=0$ and $\beta<\infty$, then the definition of $\delta$, in the first case, and the definition of $\beta$, in the second case, degenerates, respectively, in the conditions \eqref{gamma condition} and \eqref{zeta condition}.

At this point, it is imperative to know how the model's data influences the continuation region. The next proposition gives us the set of parameters for which ${\cal D}\varsubsetneq]0,\infty[$ holds true.
\begin{prop}\label{x^* diferente de zero}
Let $x^*, \gamma, \zeta,\delta$ and $\beta$ be defined as in definition \ref{def of x*}. Assuming that $\Pi$ is such that:
\begin{itemize}
\item[1)] in assumption \ref{A1}, $0<x_{1l}<x_{2r}=\infty$ or $0<x_{1l}<x_{2r}<\infty$, there is $\epsilon>0$ and $k>0$, such that $\Pi(x)<-k$, for all $x\in]0,\epsilon[$ and, finally, either $d_2\geq 0$ or both $d_2< 0 \text{ and } v_p^-(x)=\infty \text{ for all } x>0 $. Then, one of the conditions $x^*=\gamma>0$ or $x^*=\delta>0$ holds true.
\item[2)] in assumption \ref{A1}, $0=x_{1l}<x_{2r}<\infty$ or $0<x_{1l}<x_{2r}<\infty$, there is $M>0$ and $k>0$, such that $\Pi(x)<-k$, for all $x\in]M,\infty[$ and, finally, either $d_1\leq 0$ or both $d_1> 0 \text{ and } v_p^-(x)=\infty \text{ for all } x>0 $. Then, one of the conditions $x^*=\zeta<\infty$ or $x^*=\beta<\infty$ holds true.
\end{itemize}
\end{prop}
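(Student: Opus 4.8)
The plan is to prove part (1) directly and to obtain part (2) by the analogous argument carried out near $+\infty$ (with the roles of $d_1$ and $d_2$, and of the two boundary behaviours, interchanged). Throughout write $k_C(s):=\frac{(C/s)^{d_2}-(C/s)^{d_1}}{s}=C^{d_2}s^{-d_2-1}-C^{d_1}s^{-d_1-1}$, and recall from \eqref{roots} that $d_1<d_2$; hence for $0<s<C$ one has $(C/s)^{d_2}>(C/s)^{d_1}$ and therefore $k_C(s)>0$. By \eqref{gamma condition} and \eqref{delta beta condition}, to conclude $\gamma>0$ (when $x_{2r}=\infty$) or $\delta>0$ (when $x_{2r}<\infty$) it suffices to exhibit some $x_0>0$ with
\[
\int_x^C k_C(s)\Pi(s)\,ds<0\qquad\text{for every }x\in\,]0,x_0[\text{ and every admissible }C,
\]
where "admissible" means $C>x$ in the first case and $C=\beta$ in the second; this shows that no $x<x_0$ belongs to the set whose infimum defines $\gamma$ (resp. $\delta$).

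First I would fix $\epsilon\in\,]0,\min(1,x_{1l})[$ with $\Pi<-k<0$ on $]0,\epsilon[$, possible by hypothesis and by assumption \ref{A1} (note $\epsilon<x_{1r}$, so $\Pi^+\equiv 0$ on $]0,x_{1r}[$). For $x<\epsilon$ set $P_i(x):=\int_x^\epsilon s^{-d_i-1}\Pi(s)\,ds<0$. The two ingredients are (a) a bound on the positive contribution and (b) a comparison between $P_1$ and $P_2$. For (a), since $k_C(s)\le C^{d_2}s^{-d_2-1}$ for $s<C$ and, by assumption \ref{A2} together with Remark \ref{l1}, $B:=\int_\epsilon^\infty s^{-d_2-1}\Pi^+(s)\,ds<\infty$, we get $\int_\epsilon^C k_C(s)\Pi(s)\,ds\le\int_\epsilon^C k_C(s)\Pi^+(s)\,ds\le B\,C^{d_2}$. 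For (b), writing $s^{-d_1-1}=s^{d_2-d_1}s^{-d_2-1}$ and using $0<s<\epsilon<1$ with $d_2-d_1>0$ gives $s^{d_2-d_1}\le\epsilon^{d_2-d_1}$, whence $|P_1(x)|\le\epsilon^{d_2-d_1}|P_2(x)|$.

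Combining these, for $C>x_{1r}$ one has $\int_x^C k_C\Pi\le C^{d_2}(P_2(x)+B)-C^{d_1}P_1(x)=C^{d_1}\bigl(C^{d_2-d_1}(P_2(x)+B)+|P_1(x)|\bigr)$, and for small $x$ (so that $P_2(x)+B<0$) this is strictly negative as soon as $C^{d_2-d_1}>|P_1(x)|/|P_2(x)+B|$; by (b) the right-hand side is at most $\epsilon^{d_2-d_1}|P_2(x)|/|P_2(x)+B|\to\epsilon^{d_2-d_1}<x_{1r}^{d_2-d_1}$, so the inequality holds for all $C\ge x_{1r}$ once $x$ is small. For $x<C\le x_{1r}$ the integral is negative directly, since $k_C>0$ and $\Pi\le 0$ on $]0,x_{1r}[$ (strictly on $]0,\epsilon[$). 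This gives $x_0$ and hence $\gamma>0$; the case $C=\beta$ (yielding $\delta>0$) is the same computation with the single value $C=\beta>x_{1r}$, the finite term $\int_\epsilon^\beta k_\beta\Pi$ replacing the tail bound.

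The whole argument hinges on $|P_2(x)|\to\infty$ as $x\downarrow 0$, i.e. on the cost of waiting accumulated in the negative region near the origin being unbounded, and this is where the dichotomy on $d_2$ enters and where the main difficulty lies. When $d_2\ge 0$ one has $P_2(x)\le -k\int_x^\epsilon s^{-d_2-1}\,ds\to-\infty$ automatically. When $d_2<0$ the weight $s^{-d_2-1}$ is integrable at $0$, so this is no longer automatic; here one must invoke $v_p^-\equiv\infty$ and the characterisation in Remark \ref{l1} to deduce $\int_{0^+}s^{-d_2-1}\Pi^-(s)\,ds=\infty$, i.e. $P_2(x)\to-\infty$. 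The delicate point—the step I expect to be the real obstacle—is to guarantee that this divergence is genuinely located at the origin: when $x_{2r}=\infty$ it is, because $\Pi^-$ is supported near $0$; but in the mixed configuration $0<x_{1l}<x_{2r}<\infty$ the divergence of $v_p^-$ could a priori come from $+\infty$, and one must argue, using $\Pi<-k$ near $0$ and the integrability dichotomy, that the contribution at $0$ is the one forcing $|P_2(x)|\to\infty$. Finally, part (2) follows by the symmetric argument near $+\infty$: the change of variables $x\mapsto 1/x$ turns the GBM into another GBM with characteristic roots $-d_2<-d_1$ and interchanges the behaviours of $\Pi$ at $0$ and at $\infty$, so that the hypothesis "$d_1\le 0$, or $d_1>0$ and $v_p^-\equiv\infty$" becomes exactly the hypothesis of part (1) for the transformed problem and "$\zeta<\infty$ or $\beta<\infty$" becomes "$\gamma>0$ or $\delta>0$".
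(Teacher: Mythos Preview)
Your argument is essentially correct but takes a considerably more laborious route than the paper's. The paper does not work directly from Definition~\ref{def of x*}; instead it invokes Proposition~\ref{x^* equation}, which characterises $\gamma$ (resp.\ $\zeta$, $\delta$, $\beta$) as the solution of a \emph{single} integral equation $\int_\gamma^\infty s^{-d_2-1}\Pi(s)\,ds=0$. With that in hand, the whole proof of part~(1) reduces to the one-line estimate
\[
\lim_{x\downarrow 0}\int_x^{\nu}s^{-d_2-1}\Pi(s)\,ds\ \le\ \int_\epsilon^{\nu}s^{-d_2-1}\Pi(s)\,ds-\lim_{x\downarrow 0}\int_x^{\epsilon}k\,s^{-d_2-1}\,ds\ =\ -\infty
\]
(for $d_2\ge 0$), which together with the positivity of $\int_{x_{1r}}^{\infty}s^{-d_2-1}\Pi(s)\,ds$ yields a positive zero $\gamma$ by continuity. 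Your approach instead has to control $\int_x^C k_C(s)\Pi(s)\,ds$ \emph{uniformly in the extra parameter $C$}, which is why you need the comparison $|P_1(x)|\le\epsilon^{d_2-d_1}|P_2(x)|$ and the case split $C\le x_{1r}$ versus $C>x_{1r}$. What you gain is independence from Proposition~\ref{x^* equation}: your argument works straight from the infimum definition and would survive even if one had not yet established the integral-equation characterisation.

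On the $d_2<0$, $v_p^-=\infty$ case: the paper dispatches this in a single sentence (``in light of Remark~\ref{l1} the proof is straightforward''), so your concern about the mixed configuration $0<x_{1l}<x_{2r}<\infty$ with divergence coming from $+\infty$ is not addressed any more explicitly there than in your write-up. Note, however, that in the unmixed case $x_{2r}=\infty$ your worry dissolves cleanly: $\Pi^-$ is supported near $0$, Remark~\ref{l1} gives $\int_{0^+}s^{-d_1-1}\Pi^-=\infty$, and since $s^{-d_2-1}\ge s^{-d_1-1}$ for $s<1$ (as $d_2>d_1$) you get $|P_2(x)|\to\infty$ as needed. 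Your symmetry reduction for part~(2) via $x\mapsto 1/x$ is a nice touch not present in the paper, which simply repeats the computation with $d_1$ in place of $d_2$.
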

\begin{proof}
In light of remark \ref{l1}, the proof is straightforward when we assume that  $v_p^-(x)=\infty$. Furthermore, taking into account propositions \ref{x^* equation} and \ref{auxiliar para condicoes integrais}, the result in 1) and 2) follows respectively from:
\begin{align}
&\lim_{x\downarrow 0}\int_x^{\nu}s^{-d_2-1}\Pi(s)ds\leq\int_\epsilon^{\nu}s^{-d_2-1}\Pi(s)ds-\lim_{x\downarrow 0}\int_x^{\epsilon}ks^{-d_2-1}ds=-\infty,\\
&\lim_{x\to \infty}\int_\nu^{x}s^{-d_1-1}\Pi(s)ds\leq\int_\nu^{M}s^{-d_1-1}\Pi(s)ds-\lim_{x\to \infty}\int_M^{x}ks^{-d_1-1}ds=\infty.
\end{align} 
\end{proof}
When the conditions in proposition \ref{x^* equation} are not satisfied, it is impossible to know \textit{a priori} what kind of continuation region $\left({\cal D}=]0,\infty[ \text{ or } {\cal D}\varsubsetneq]0,\infty[\right)$ is expected, in particular  when $r<0$. Even when $r\geq 0$, if $\Pi(x)\to 0$ as $x\downarrow 0$ or $x\to \infty$, it may be impossible to answer this question \textit{a priori}. To exemplify these questions, suppose that $\Pi:]0,\infty[\to\mathbb{R}$ is a sufficiently smooth function, satisfying $0<x_{1l}<x_{2r}=\infty$ in assumption \ref{A1}, and such that $\Pi(x)\to 0$ as $x\downarrow 0$. If $\Pi^{(n)}(0)=\lim\limits_{x\downarrow 0}\Pi^{(n)}(x)$, where  $\Pi^{(n)}$ represents the $n$-th derivative of $\Pi$ and $n=\inf\{n\in\mathbb{N}:\Pi^{(n)}(0)\neq 0\}$, then the improper integral $\int_0^{\nu}s^{-d_2-1}\Pi(s)ds$ is convergent, depending on the sign of $n-d_2$.

Now, we are able to present the main result of this section. 

\begin{teo}\label{teo importante}
Let $V$ be the value function defined as in \eqref{optimal00} and $x^*, \gamma, \zeta,\delta, \beta, {\cal D}$ and ${\cal D}^c$ defined as in definition \ref{def of x*}.
\begin{itemize}
\item[1)] If one of the conditions $x^*=\gamma>0$, $x^*=\zeta<\infty$ or either $x^*=\delta>0$ or $x^*=\beta<\infty$ holds true, then $V$ is given by \eqref{value function} and ${\cal D}$ and ${\cal D}^c$ are indeed the stopping and continuation regions, respectively;
\item[2)] otherwise, $V(x)=v_p(x)$, for all $x>0$, where $v_p$ is given by \eqref{particular function}. Furthermore, ${\cal D}^c=]0,\infty[$.
\end{itemize}
\end{teo}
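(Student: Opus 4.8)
The plan is to deduce the whole statement from the Verification Theorem \ref{teorema da verificacao}. Concretely, I would take the candidate $V$ prescribed by \eqref{value function}, with $x^*$ and the pair $({\cal D},{\cal D}^c)$ chosen according to the relevant sub-case of Definition \ref{def of x*}, and check that it satisfies the three hypotheses of that theorem: $V\in C^1(]0,\infty[)$ with $V'\in AC(]0,\infty[)$, $V$ solves the HJB equation \eqref{HJB}, and the tail condition \eqref{condi??o auxiliar} holds for $\tau_0=\inf\{t>0:V(X(t))=0\}$. Once these are in place, conclusions (i)--(iii) of Theorem \ref{teorema da verificacao} give exactly statement 1): $V$ is the value function, $\tau^*=\tau_0$, and $\cal D$, ${\cal D}^c$ are the stopping and continuation regions. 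All three sub-cases are handled identically, so I would carry the argument explicitly for case a) ($x^*=\gamma>0$) and indicate the obvious modifications for b) and c).

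For the regularity, on ${\cal D}^c$ the candidate is the Caratheodory solution of the ODE \eqref{ode}, hence $C^1$ with absolutely continuous derivative by the discussion following \eqref{ode_modified} (Filippov \cite{filippov} together with Assumption \ref{A3}); on the interior of $\cal D$ it is identically $0$. The matching at the free boundary is automatic from the construction: because $x^*$ is the endpoint of integration in \eqref{v and v'}, one has $V(x^*)=V'(x^*)=0$, so $V$ and $V'$ extend continuously by $0$ across the boundary, giving global $C^1$ regularity with $V'\in AC$. For the HJB equation I would split into the two regions. On ${\cal D}^c$ the ODE gives $-{\cal L}V-\Pi=0$, so there it remains only to prove $V>0$. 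On $\cal D$ one has $V\equiv 0$, hence $-{\cal L}V-\Pi=-\Pi$, and the required inequality reduces to $\Pi\leq 0$ on $\cal D$. This last fact I would extract from the integral--equation characterization of the boundary in Proposition \ref{x^* equation}: writing $\Phi(x)=\int_x^\infty s^{-d_2-1}\Pi(s)\,ds$, Assumption \ref{A1} forces $\Phi$ to be strictly positive on $[x_{1l},x_{2l}[$, so the zero $\gamma$ of $\Phi$ required by \eqref{gamma eq} (which exists in case 1) must lie in $]0,x_{1l}[$; hence ${\cal D}=]0,\gamma]\subset]0,x_{1l}[$, where $\Pi<0$. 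The analogous computations with \eqref{zeta eq} and \eqref{zeta, delta eq} place $\zeta$ beyond $x_{2r}$ and $\delta,\beta$ on either side of the positivity interval of $\Pi$, settling the sign condition on $\cal D$ in cases b) and c).

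The two remaining points are the tail condition and the strict positivity of $V$ on ${\cal D}^c$. For the former I would first establish the pointwise bound $0\leq V\leq v_p^+$: rewriting $V$ on ${\cal D}^c$ by means of \eqref{gamma eq} as $V(x)=\frac{2}{\sigma^2(d_2-d_1)}\bigl(x^{d_2}\int_x^\infty s^{-d_2-1}\Pi(s)\,ds+x^{d_1}\int_\gamma^x s^{-d_1-1}\Pi(s)\,ds\bigr)$, and then replacing $\Pi$ by $\Pi^+$ in both integrals and enlarging the domain of the second from $]\gamma,x[$ to $]0,x[$, only increases the two nonnegative-weight terms, yielding $V\leq v_p^+$. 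By the Markov property and time homogeneity, $e^{-rt}E_x[v_p^+(X(t))]=E_x\bigl[\int_t^\infty e^{-rs}\Pi^+(X(s))\,ds\bigr]\to 0$ as $t\to\infty$, since $v_p^+(x)<\infty$ by Assumption \ref{A2}; as $0\leq V(X(t)){\cal I}_{\{\tau_0>t\}}\leq v_p^+(X(t))$, condition \eqref{condi??o auxiliar} follows. The strict positivity $V>0$ on ${\cal D}^c$ is the step I expect to be the main obstacle. Locally near $x^*$ it is easy: since $V(x^*)=V'(x^*)=0$ while $\tfrac12\sigma^2(x^*)^2V''=-\Pi(x^*)>0$ as $x^*$ is approached from within ${\cal D}^c$, the candidate is convex and strictly positive just inside ${\cal D}^c$. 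The global statement, however, cannot be read off the ODE directly because $\Pi$ changes sign; it must be deduced from the extremal (infimum/supremum) defining property of $\gamma$ (respectively $\zeta,\delta,\beta$) together with the monotonicity and sign properties of the kernel integrals $\int\frac{(x/s)^{d_2}-(x/s)^{d_1}}{s}\Pi(s)\,ds$ collected in Proposition \ref{auxiliar para condicoes integrais}. This is precisely where the fine structure of Definition \ref{def of x*} enters, and with $V>0$ on ${\cal D}^c$ and $V=0$ on $\cal D$ established, ${\cal D}^c=\{V>0\}$ and ${\cal D}=\{V=0\}$, matching conclusion (iii).

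Finally, for statement 2) I would argue that when none of the conditions of part 1) holds the boundary degenerates ($\gamma=0$, or $\zeta=\infty$, or $\delta=0$ and $\beta=\infty$), so that ${\cal D}^c=]0,\infty[$ and stopping is never optimal. In this regime $v_p^-<\infty$ (by Remark \ref{l1} and Proposition \ref{x^* diferente de zero}), so $v_p$ is finite; feeding the degeneracy into the same kernel-integral analysis of Proposition \ref{auxiliar para condicoes integrais} used in part 1) yields $v_p>0$ on all of $]0,\infty[$. Then $v_p$ solves \eqref{ode} everywhere, so $\min\{-{\cal L}v_p-\Pi,\,v_p\}=\min\{0,v_p\}=0$; the tail condition holds via the trivial bound $v_p\leq v_p^+$ (now with $\tau_0=\infty$); and Theorem \ref{teorema da verificacao} gives $V=v_p$ with ${\cal D}^c=]0,\infty[$, as claimed.
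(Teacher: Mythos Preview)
Your overall strategy coincides with the paper's: both reduce the theorem to checking the hypotheses of the Verification Theorem~\ref{teorema da verificacao} for the candidate \eqref{value function}. The genuine difference lies in the treatment of the tail condition~\eqref{condi??o auxiliar}. The paper proceeds analytically: Corollary~\ref{envelopes para v} supplies a power envelope $V(x)\le b x^{\beta}$ with $\beta$ strictly inside $]d_1,d_2[$, and Lemmas~\ref{A.1}--\ref{A.2} then compute $E_x\!\left[e^{-rt}X(t)^{\beta}{\cal I}_{\{X(t)\in{\cal D}^c\}}\right]$ via explicit Gaussian integrals and show it vanishes, case by case in the sign of $P(\beta)$. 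Your probabilistic route---bounding $V\le v_p^+$ and using the Markov property to get $e^{-rt}E_x[v_p^+(X(t))]=E_x\bigl[\int_t^\infty e^{-rs}\Pi^+(X(s))\,ds\bigr]\to 0$---is shorter and avoids those computations entirely; it is a genuinely different and arguably cleaner argument. For the HJB verification, however, the paper's Proposition~\ref{HJB eq solution} handles the step you single out as the main obstacle (global positivity of $V$ on ${\cal D}^c$) by a direct contradiction: if $\int_{x^*}^x\frac{(x/s)^{d_2}-(x/s)^{d_1}}{s}\Pi(s)\,ds>0$ for some $x\in{\cal D}^c$, continuity of the ODE solution in the initial point yields the same inequality with $x^*$ replaced by some $x'$ strictly on the stopping side of $x^*$, contradicting the infimum/supremum in Definition~\ref{def of x*}. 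This is more concrete than your sketch via Proposition~\ref{auxiliar para condicoes integrais}, and also delivers $\Pi(x^*)<0$ (hence $\Pi\le 0$ on $\cal D$) without going through Proposition~\ref{x^* equation}. Finally, your claim in part~2) that $v_p^-<\infty$ follows ``by Remark~\ref{l1} and Proposition~\ref{x^* diferente de zero}'' is not immediate from the contrapositive of that proposition (which has several disjuncts); the paper does not argue this explicitly either and simply invokes \cite{knudsen1998valuation} for the tail condition in that regime.
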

\begin{proof}
The proof of this result follows directly from proposition \ref{HJB eq solution}, corollary \ref{envelopes para v} and lemmas \ref{A.1} and \ref{A.2}. Moreover, the proof of \eqref{condi??o auxiliar} in case 2) can be found in  Knudsen, Meister and Zervos \cite{knudsen1998valuation}.
\end{proof}

\begin{remark} 
If $v_p^-(x)<\infty$, the value function \eqref{value function} takes the following familiar representation:
 \begin{align*}
 v(x)=
 \begin{cases}
a_1x^{d_1}+a_2x^{d_2}+v_p(x),& x\in D^c\\
0, & x\in D.
\end{cases}
 \end{align*}
 with $v_p$ given by \eqref{particular function}, $a_1=\frac{-2}{\sigma^2(d_2-d_1)}\int_{0}^{x^*} s^{-d_1-1}\Pi(s)ds$ and $a_2=\frac{-2}{\sigma^2(d_2-d_1)}\int_{x^*}^{\infty} s^{-d_2-1}\Pi(s)ds$. 
This representation is widely used in the literature of real options (see e.g. Knudsen, Meister and Zervos \cite{knudsen1998valuation} and Kobila \cite{kobila1993}). For particular choices of $\Pi$, it is possible to explicitly compute the involved integrals  (see e.g. Dixit and Pindyck \cite{DixitPindyck}, Guerra, Nunes and Oliveira \cite{nossoartigo} and Trigeorgis \cite{trigeorgis}). Moreover, according to \eqref{zeta eq}-\eqref{gamma eq}, $a_1,a_2$ may (or may not) be equal to $0$. 

\end{remark}

As we have introduced in the beginning of this paper, the optimal entrance time problem is strictly related to the problem discussed until know. Indeed, if condition \eqref{usual} holds true, it is possible to write
$$\sup_{\tau\in{\cal S}}L(x,\tau)=J(x,\infty)+\sup_{\tau\in{\cal S}}(-J(x,\tau))=v_p(x)+\sup_{\tau\in{\cal S}}(-J(x,\tau)),$$
with $v_p$ is defined in \eqref{particular function}. Therefore, and according to theorem  \ref{teo importante}, there exists one stopping time $\tau^*\in{\cal S}$, such that both functional
$$
E_x\left[\int_{\tau}^{\infty}e^{-r s}\Pi(X(s))ds\right]\quad \text{and}\quad E_x\left[\int_0^{\tau}e^{-r s}\left(-\Pi(X(s))\right)ds\right]
$$
are maximized when $\tau=\tau^*$. Moreover, the stopping and continuation regions for problem \eqref{entry problem}, when one considers the running function $\Pi$, are, respectively, the stopping and continuation regions for the optimal stopping problem \eqref{optimal00}, for the running function $-\Pi$. 

Here, we simply have a well-defined problem if both conditions $v_p^+(x)<\infty$ and $v_p^-(x)<\infty$ hold true. In fact, on the one hand, if $v_p^+(x)=\infty$ for all $x\in\mathbb{R}^+$, $L(0,x)=\infty$ and, thus, the problem is trivial: the entrance should occur immediately. On the other hand, if $v_p^-(x)=\infty$ for all $x\in\mathbb{R}^+$, $L(\tau,x)=-\infty$, for all $\tau$ and, therefore, the entrance should never occur.

We make the following remark, in order to avoid misunderstandings in the interpretations of the next sections, particularly in the section of the Sensitivity Analysis: 
\begin{remark} 
In real options, it is common to assume, in both exit and investment options, having  profit functions, such that, in assumption \ref{A1}, $0<x_{1l}<x_{2r}=\infty$ holds true. In light of the comments made concerning the relation between the optimal stopping time problem and the optimal entrance time problem, we will associate, without further references, the threshold $\gamma$ with the exit option and the threshold $\zeta$ with the investment option. 
\end{remark}

\section{Sensitivity analysis}\label{Sensitivity Analysis}
In this section, we study the behaviour of the stopping strategy concerning the diffusion parameters $\alpha$ and $\sigma^2$. As we have previously stated, we are able to acommodate several non-standard assumptions, as non-monotonic payoff functions and negative interest rates, in our analysis. Contributions like  Dixit and Pindyck \cite{DixitPindyck}, Guerra, Nunes and Oliveira \cite{nossoartigo} and Trigeorgis \cite{trigeorgis} consider more restrictive cases, with, for example, positive interest rates and polynomial increasing pay-off functions. Here, we need to distinguish two cases:
\begin{itemize}
\item[(i)] The profit function $\Pi$ changes its sign just once (and, thus, either $0<x_{1l}<x_{2r}=\infty$ or $0=x_{1l}<x_{2r}<\infty$). In that case, the continuation and stopping regions are two complementary sets, and the relevant thresholds are $\gamma$ and $\zeta$, respectively. In order to emphasize how $\gamma$ and $\zeta$ depend on the parameters $\alpha$ and $\sigma^2$, we introduce the notation $\gamma(\alpha,\sigma^2)$ and $\zeta(\alpha,\sigma^2)$. As this will help to better understand the results, we will associate the continuation region $(\gamma,\infty)$ to the decision to exit the market, and the continuation region $(0,\zeta)$ to the investment decision. Consequently, in this setting, if $\gamma$ increases/decreases then the exit decision is antecipated/postponed, whereas if $\zeta$ increases/decreases the investment decision is postponed/antecipated.

\item[(ii)] The profit function changes its sign twice (and, in that case, we have $0<x_{1l}<x_{2r}<\infty$). Here, we were not able to derive analytical results concerning  the behaviour of the two thresholds $\delta$ and $\beta$. In section \ref{exemplo} we provide a numerical illustration, and, in particular, we check, numerically, what is the influence of $\alpha$ and $\sigma^2$ on these thresholds.
\end{itemize}

Next, we present the results derived for case (i). We note that, when the interest rate $r$ is positive and the pay-off function is monotonic (increasing or decreasing), the results that we derive are the ones that we find in the literature. One of the punch-marks of this paper is precisely the results concerning the behaviour of the thresholds with the drift $\alpha$ and the volatility $\sigma^2$ for negative interest rate $r$, as they are quite different from the standard ones. In particular we lose, in some cases, the monotonicity of the thresholds. 

The interpretation and validity of these results is far from being trivial and general, and relies on the type of pay-off function $\Pi$ that we are considering. For example, if one assumes an increasing pay-off function, negative interest rates and $\frac{\sigma^2}{2}-\alpha<0$ lead to trivial problems. Therefore, in this section, we present the results regarding the behaviour of the thresholds with respect to $\alpha$ and $\sigma^2$ without an exhaustive interpretation, keeping in the back of our minds that some choices of $r$ and $\alpha$ may not be possible for the particular pay-off function under consideration. Thus, the application of these results to a concrete case must be carefully analysed. We will come back to this question in section \ref{exemplo}, where we will present an illustrative example.

In the following tables we summarize the results that we found for the behaviour of the decision thresholds. We stress that the signs of the derivatives of the thresholds concerning the parameters have exactly the same signs of the derivatives of the characteristic polynomial roots in order to the same parameters, as one can see in the next proposition and in proposition \ref{raizes decrescentes}.

\begin{prop}
Consider the optimal stopping problem \eqref{optimal00} under the assumptions \ref{A3}-\ref{A1} and the thresholds $\gamma$ and $\zeta$ as in \eqref{gamma eq}-\eqref{zeta eq}. Then, the functions $\gamma(\alpha,\sigma^2)$ and $\zeta(\alpha,\sigma^2)$ are such that the information in the tables \eqref{table 1}-\eqref{table 3} is verified.
\end{prop}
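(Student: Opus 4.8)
The plan is to treat each threshold as a function defined implicitly by its scalar integral equation and to differentiate through the implicit function theorem, exploiting the fact that $\gamma$ enters only via the root $d_2$ and $\zeta$ only via $d_1$. Since, by \eqref{roots}, $d_1$ and $d_2$ are smooth functions of $(\alpha,\sigma^2)$ on the admissible regime (real, distinct roots), once I establish that $\partial\gamma/\partial\theta$ is a \emph{strictly positive} multiple of $\partial d_2/\partial\theta$ and that $\partial\zeta/\partial\theta$ is a strictly positive multiple of $\partial d_1/\partial\theta$, for $\theta\in\{\alpha,\sigma^2\}$, every entry of the tables follows at once from the signs of the root derivatives. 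This is exactly the ``same sign'' statement announced before the proposition.

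First I would set, for the exit case, $\Phi(x,d_2)=\int_x^\infty s^{-d_2-1}\Pi(s)\,ds$, so that \eqref{gamma eq} reads $\Phi(\gamma,d_2)=0$. Differentiating under the integral sign gives
\begin{equation*}
\frac{\partial\Phi}{\partial x}=-x^{-d_2-1}\Pi(x),\qquad \frac{\partial\Phi}{\partial d_2}=-\int_x^\infty s^{-d_2-1}\ln s\,\Pi(s)\,ds,
\end{equation*}
and, since $\Phi$ depends on $\theta$ only through $d_2$, the implicit function theorem yields
\begin{equation*}
\frac{\partial\gamma}{\partial\theta}=-\frac{\partial_{d_2}\Phi}{\partial_x\Phi}\,\frac{\partial d_2}{\partial\theta}=-\frac{\int_\gamma^\infty s^{-d_2-1}\ln s\,\Pi(s)\,ds}{\gamma^{-d_2-1}\Pi(\gamma)}\,\frac{\partial d_2}{\partial\theta}.
\end{equation*}
The parallel computation with $\Psi(x,d_1)=\int_0^x s^{-d_1-1}\Pi(s)\,ds$, whose $x$-derivative is $+x^{-d_1-1}\Pi(x)$, produces $\partial\zeta/\partial\theta=-(\partial_{d_1}\Psi/\partial_x\Psi)\,\partial d_1/\partial\theta$.

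The crux, and the step I expect to be the main obstacle, is to show that the scalar prefactors multiplying $\partial d_2/\partial\theta$ and $\partial d_1/\partial\theta$ are strictly positive. Two ingredients are needed. The first is the location of the thresholds: in case (i) the function $\Pi$ changes sign exactly once, so in the exit configuration ($\Pi$ negative then positive) the balance imposed by \eqref{gamma eq} forces $\gamma$ into the region where $\Pi<0$, and symmetrically $\zeta$ into the region where $\Pi<0$ in the investment configuration. Hence $\Pi(\gamma)<0$ and $\Pi(\zeta)<0$, which also guarantees $\partial_x\Phi\neq0$ and $\partial_x\Psi\neq0$, so that the implicit function theorem indeed applies. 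The second ingredient pins the sign of the $\ln$-weighted integral. Using that \eqref{gamma eq} has zero total mass, I would recentre the logarithm at a point $s_0$ separating the negative and positive parts of $\Pi$, writing
\begin{equation*}
\int_\gamma^\infty s^{-d_2-1}\ln s\,\Pi(s)\,ds=\int_\gamma^\infty s^{-d_2-1}\ln\!\left(\frac{s}{s_0}\right)\Pi(s)\,ds,
\end{equation*}
after which $\ln(s/s_0)$ and $\Pi(s)$ carry the same sign on each side of $s_0$, making the integrand nonnegative and the integral strictly positive. Combined with $\partial_{d_2}\Phi<0$ and $\partial_x\Phi=-\gamma^{-d_2-1}\Pi(\gamma)>0$, the ratio is negative and the leading minus sign renders the $\gamma$-prefactor positive, so $\operatorname{sign}(\partial\gamma/\partial\theta)=\operatorname{sign}(\partial d_2/\partial\theta)$. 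The mirror argument gives a strictly negative $\ln$-weighted integral for $\zeta$ and, together with $\partial_x\Psi=\zeta^{-d_1-1}\Pi(\zeta)<0$, the positive prefactor yielding $\operatorname{sign}(\partial\zeta/\partial\theta)=\operatorname{sign}(\partial d_1/\partial\theta)$.

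Finally, I would substitute the explicit partial derivatives of $d_1$ and $d_2$ with respect to $\alpha$ and $\sigma^2$, read off directly from \eqref{roots} and recorded in proposition \ref{raizes decrescentes}, into these two sign identities to obtain every entry of tables \eqref{table 1}--\eqref{table 3}. The only regularity caveat is that $\Pi$ be continuous at the threshold, so that $\partial_x\Phi$ and $\partial_x\Psi$ are the classical derivatives; away from such points the absolute continuity of the integrals furnished by Assumption \ref{A3} still validates the differentiation almost everywhere, which is enough for the monotonicity conclusions.
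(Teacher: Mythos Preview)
Your proposal is correct and follows essentially the same route as the paper: implicit differentiation of \eqref{gamma eq}--\eqref{zeta eq} produces exactly the formulas the paper records, the sign of the $\ln$-weighted integrals is established by the recentring trick (the paper packages this as Lemma~\ref{analise real}, recentring at the integration endpoint rather than at the zero-crossing $s_0$, but the argument is the same), the facts $\Pi(\gamma)<0$ and $\Pi(\zeta)<0$ are used identically, and the table entries are then read off from the signs of $\partial d_i/\partial\theta$ in Lemma~\ref{raizes decrescentes}. Your regularity caveat at the end is a useful observation that the paper omits.
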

\begin{proof}
Using the implicit differentiation rule in the equations \eqref{gamma eq}-\eqref{zeta eq}, we derive the following: 
\begin{align*}
&\frac{\partial \gamma}{\partial i}(\alpha,\sigma^2)=-\frac{\partial d_2}{\partial i}\frac{\int_{\gamma}^{\infty}s^{-d_2-1}\log(s)\Pi(s)ds}{{(\gamma)}^{-d_2-1}\Pi(\gamma)},\quad \text{with }i=\alpha\text{ or }\sigma^2,\\
&\frac{\partial \zeta}{\partial i}(\alpha,\sigma^2)=\frac{\partial d_1}{\partial i}\frac{\int_{0}^{\zeta}s^{-d_1-1}\log(s)\Pi(s)ds}{{(\zeta)}^{-d_1-1}\Pi(\zeta)},\quad \text{with }i=\alpha\text{ or }\sigma^2.
\end{align*}
The result follows, in view of the lemmas \ref{raizes decrescentes} and \ref{analise real}, and from the fact that $\Pi(\gamma)<0$ and $\Pi(\zeta)<0$. 
\end{proof}
 
In table \ref{table 1} we study the behaviour of $\zeta$ and $\gamma$ with the drift $\alpha$, assuming that $\sigma^2$ is fixed. We note that, in each of the following tables, we simply cover the parameters' domain for which $d_1<d_2\in\mathbb{R}$.

\begin{table}[h!]
\centering
\caption{Sign of the derivatives of $\zeta(.,\sigma^2)$ and $\gamma(.,\sigma^2)$, as well as $d_1(., \sigma^2)$ and $d_2(., \sigma^2)$.}
\label{table 1}
\begin{tabular}{l|c|c|c|c|c|}
\cline{2-6}
                                                     & \multicolumn{2}{c|}{$r<0$}                & \multicolumn{2}{c|}{$r=0$}                & $r>0$ \\ \cline{2-6} 
                                                     & $\alpha<\sigma^2/2$ & $\alpha>\sigma^2/2$ & $\alpha<\sigma^2/2$ & $\alpha>\sigma^2/2$ &       \\ \hline
\multicolumn{1}{|l|}{$\partial d_1/\partial \alpha$ and $\partial \zeta/\partial \alpha$} & $>0$                & $<0$                & $=0$                & $<0$                & $<0$  \\ \hline
\multicolumn{1}{|l|}{$\partial d_2/\partial \alpha$ and $\partial \gamma/\partial \alpha$} & $<0$                & $>0$                & $<0$                & $=0$                & $<0$  \\ \hline
\end{tabular}
\end{table}

Therefore, for positive interest rates, $r$, we obtain the known result: increasing the drift has the effect of postponing the exit and anticipating the investment decision. On the contrary, for negative $r$, the result is quite different: for sufficiently {\em large} values of $\alpha$ (when $\alpha>\frac{\sigma^2}{2}$) both decisions are antecipated whereas for {\em small} values of $\alpha$ both decisions are postponed. Furthermore, if the interest rate is zero,   {\em small} values of the drift do not have any impact on the decision regarding investment, but they impact on the exit decision, and the abandonment is postponed. In case of {\em large} values of the drift, the exit decision is not affected by changing drift, but the investment decision occurs earlier.

In Tables \ref{table 2} and \ref{table 3}, we analyse the monotonicity of $\gamma$ and $\zeta$ in order to $\sigma^2$, with $\alpha$ fixed.

\begin{table}[h!]
\centering
\caption{Sign of the derivatives of $\zeta(\alpha,.)$ and $d_1(\alpha,.)$.}
\label{table 2}
\begin{tabular}{|c|c|c|c|c|}
\hline
\multicolumn{2}{|c|}{$r<0$}                       & \multicolumn{2}{c|}{$r=0$}                        & $r>0$ \\ \hline
$\alpha<{\sigma^2}/{2}$ & $\alpha>{\sigma^2}/{2}$ & $\alpha<{\sigma^2}/{2}$ & $\alpha>{\sigma^2}/{2}$ &       \\ \hline
$<0$                    & $>0$                    & $=0$                    & $>0$                    & $>0$  \\ \hline
\end{tabular}
\end{table}
\vspace{0.5cm}

\begin{table}[h!]
\centering
\caption{Sign of the derivatives of $\gamma(\alpha,.)$ and $d_2(\alpha,.)$.}
\label{table 3}
\begin{tabular}{|c|c|c|c|c|c|c|c|c|c|}
\hline
\multicolumn{4}{|c|}{$r<0$}                                              & \multicolumn{3}{c|}{$r=0$}                               & \multicolumn{3}{c|}{$r>0$}           \\ \hline
$\alpha<r$ & $\alpha=r$ & $r\leq\alpha<\sigma^2/2$ & $\alpha>\sigma^2/2$ & $\alpha<0$ & $0<\alpha<\sigma^2/2$ & $\alpha>\sigma^2/2$ & $\alpha<r$ & $\alpha=r$ & $\alpha>r$ \\ \hline
$<0$       & =0         & \textgreater0            & $<0$                & $<0$       & $>0$                  & =0                  & $<0$       & $=0$       & $>0$       \\ \hline
\end{tabular}
\end{table}

The analysis of both tables reveals two striking facts: (i) the interest rate $r$ influences the behaviour of $\zeta$ and $\gamma$ differently when $\sigma^2$ varies, and (ii) the relative position of $\alpha$ relatively to $r$ and $\sigma^2$ completely changes how $\sigma^2$ influences $\zeta$ and $\gamma$. When $r<0$, the impact of $\alpha$ in $\zeta$ depends only on the relation between $\alpha$ and $\sigma^2$ (see table \ref{table 2}). For $\gamma$, however, this impact depends on the relation between $\alpha$ and both $\sigma^2$ and $r$ (see table \ref{table 3}). Also, we observe the following behaviours:
\begin{itemize}
\item In case $r>0$ and $\alpha<r$, we obtain the usual result: increasing volatility postpones the decision (either to invest or to exit). But in case $\alpha>r$ is allowed (which depends on the behaviour of $\Pi$), then the exit decision is anticipated and the investment decision is postponed.
\item If $r=0$, the investment decision is not affected with increasing $\sigma^2$ as long as $\alpha<\frac{\sigma^2}{2}$; if $\alpha>\frac{\sigma^2}{2}$, the investment decision is postponed. The behaviour of the exit decision is quite different: on the one hand, if $\alpha>\frac{\sigma^2}{2}$, it does not change with $\sigma^2$; on the other hand, if $\alpha<\frac{\sigma^2}{2}$, it depends on the sign of $\alpha$: if $\alpha>0$, the exit decision is anticipated, otherwise, it is postponed.
\item If $r<0$, regarding the investment decision, increasing $\sigma^2$ will anticipate it, as long as $\alpha<\frac{\sigma^2}{2}$; contrarily, the decision will occur later. Concerning the exit decision, it depends on the relation between $\alpha, \sigma^2$ and $r$. For $\alpha<r$ or $\alpha>\frac{\sigma^2}{2}$, the abandonment is postponed; conversely, the decision does not change if $\alpha=r$ and it is anticipated otherwise. 
\end{itemize}

\section{Example: non-monotonic profit functions}\label{exemplo}
In this section, we use the results previously derived, when $\Pi$ has a particular behaviour (notably, it is non-monotonic). The idea behind the example comes from a paper of Dahan and Srinivasan \cite{dahan2011impact}, where it contains some considerations about the consequences of cost reduction on gross profit. 

For product managers, the study of gross profit is crucial. Indeed, it is specially important in questions related to the impact of unit manufacturing costs, when the revenues of a firm are highly price sensitive (Griffin and Hauser \cite{Griffin}, Kahn \cite{Kahn}).

Following Dahan and Srinivasan \cite{dahan2011impact}, a gross profit function $\Pi$ is defined as follows:
\begin{equation*}
\Pi(x)=x\times q(x)-TVC(x),
\end{equation*}
where $x$ denotes the unit price of the asset, $q(x)$ is the demanded quantity, which is a function of the price $x$, and $TVC(x)$ is the {\em total variable cost} (eventually, also a function of the price $x$).  In case of proportional costs, it follows that $TVC(x)=c\times x$, where $c$ denotes the unit (variable) cost. For any given unit cost $c$,  the gross profit function $\Pi$ is strictly quasi concave and it is a smooth function of $x$. In particular, this function increases for a certain range of prices, from $0$ to $x_{\max}$, and then decreases, where $x_{max}$ is the unique profit-maximizing price, at which point the function is locally strictly concave.

In order to proceed with the illustration of the above obtained results, we assume that the firm takes price as given, and that the price process, $X$, follows a geometric Brownian motion, as in \eqref{GBM}. Additionally, we assume a particular instance of a function $\Pi$ with the described characteristics. Notably, we consider $\Pi$ as follows:
\begin{equation}\label{pi example}
\Pi(x)=\begin{cases}
-c(x-a)(x-b), & x\leq x_0\\
\frac{e}{x-f}+K, & x> x_0,
\end{cases}
\end{equation}
where $a,b,c,d,f\geq 0$ and $K\in \mathbb{R}$, and with 
\begin{align}\label{x0}
&x_0=\frac{1}{3}\left(a+b+f+\sqrt{(a+b+f)^2-3\left(ab+(a+b)f+\frac{K}{c}\right)}\right),\\
\label{e}
&e=c(2x_0-a-b)(x_0-f)^2.
\end{align}
The equalities \eqref{x0}-\eqref{e} follow, since $\Pi\in C^1(]0,\infty[)$. Here, $K$ is the "guaranteed" gross profit, for high levels of unit price. In the next two sections, we  solve the optimal stopping problem \eqref{optimal00} for $K\geq 0 $ and $K<0$, while assuming particular values for the involved parameters.

\subsection{One side stopping region : $K\geq 0$}
In this subsection, we assume that: $a=1,\text{ } b=10,\text{ }c=1,\text{ }f=2$ and $K=4$.  We note that $v_p^+(x)<\infty$, for all $x>0$  if, and only if, $r>\sigma^2-\alpha$. Therefore, for the sake of illustration, and in order to meet these conditions, we fix $r=\alpha=\sigma^2=0.1$. In figure \ref{figure 1}, we plot $\Pi$ as a function of $x$. 

\begin{figure}[!h]\center
\caption{\label{figure 1}A gross profit function $\Pi$ when $K\geq 0$.}
\vspace{0.5cm}
\includegraphics[scale=0.7]{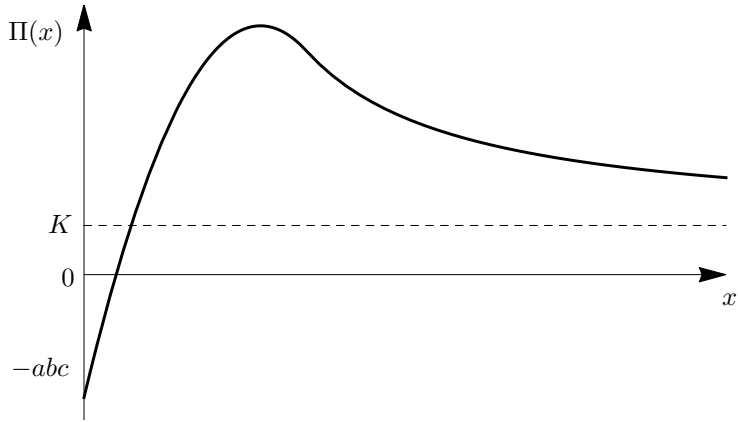}

\begin{picture}(160,10)

\put(-55,82){$K$}
\put(-70,155){$\Pi(x)$}
\put(200,55){$x$}
\put(-50,62){$0$}
\put(-69,28){$-abc$}
\end{picture}
\end{figure}

As  $\Pi<0$, for $x<1$, we expect that $D^c=]\gamma,\infty[$, with $\gamma$ being the solution of the following integral equation:
\begin{equation}
\int_{\gamma}^{x_0}-cs^{-2}(s-1)(s-10)ds+\int_{x_0}^{\infty}s^{-2}\left(\frac{e}{s-2}+4\right)ds=0\Rightarrow \gamma\simeq 0.3384,
\end{equation} 
where $x_0$ and $e$ are given as in  \eqref{x0}-\eqref{e}. Thus, by applying theorem \ref{teo importante}, the  value function for the optimisation problem is:
\begin{equation}
\label{ex1}
V(x)= \begin{cases} 0 & x<\gamma\\
-\frac{20}{3}\left(15 - x\left(\frac{11}{3}+\frac{10}{\gamma}-\gamma\right) - \frac{3}{4} x^2-\frac{1}{x^2}\left(5\gamma^2-\frac{11}{3}\gamma^3+\frac{1}{4}\gamma^4\right) \right. & \\
\hspace{1.0cm} \left. +11x \left(\log(x) - \log(\gamma)\right)\right) & \gamma<x\leq x_0\\
\frac{20}{3}(\frac{1}{x^2}\left(\frac{1}{4}(\gamma^4-x_0^4)+\frac{11}{3}(x_0^3-\gamma^3)+5\gamma^2-7x_0^2-ex_0-2e\log |x_0-f| \right) &  \\
\hspace{1.0cm} +6-\frac{e}{2} +\frac{ex}{4}\log\left|\frac{x}{x-2}\right| +\frac{e}{x}+\frac{2e}{x^2}\log|x-2|) & x>x_0.
\end{cases}
\end{equation}
Note that a simple look at \eqref{ex1} reveals the following facts (frequently omitted from the analysis of optimal stopping time problem, notably in the framework of real options):
\begin{itemize}
\item The value function, the solution of an optimal stopping problem, may not be monotonic. However, in view of the verification theorem (see section \ref{secao teorema da verificacao}), it is always a smooth function;
\item This example clearly shows that the solution in the continuation region is quite different from the profit function $\Pi$, which suggests that, in general, it may be quite difficult to propose a particular solution. The representation of the value function $V$, here proposed, may be easier to use than the usual representation, where one needs to propose a particular solution of the ODE. 
\end{itemize}

In order to finish this subsection, we would like to illustrate the behaviour of the threshold $\gamma$ (as discussed in section \ref{Sensitivity Analysis}), for this particular example. As previously mentioned, when one considers a particular profit function $\Pi$, we need to carefully choose what domain of $r$, $\alpha$ and $\sigma^2$ is assumed. We start by noting that, in light of the condition $r>\sigma^2-\alpha$ presented above, we may have different cases. If the interest rate, $r$, is negative, then $\alpha>\frac{\sigma^2}{2}$ and, therefore, the decision of abandoning is anticipated with $\alpha$, but it is postponed with $\sigma^2$. When we have a null interest rate, one also must have $\alpha>\frac{\sigma^2}{2}$ and, consequently, the decision of exiting is not affected neither with $\alpha$ nor $\sigma^2$.   
Finally, when the interest rate is positive we have the usual results: $\gamma$ increases with $\alpha$ but the behaviour with $\sigma^2$ depends on the relation of $\alpha$ and $r$. Indeed, $\gamma$  decreases with $\sigma^2$ for $\alpha<r$ and increases when $\alpha>r$. Conversely, when $\alpha=r$, the decision of exiting remains unchanged. In table \ref{zero table examples}, we illustrate these different behaviours of the exit threshold with changing $\sigma^2$ for different scenarios of $r$ and $\sigma^2$.

\begin{table}[h!]
\centering
\caption{Thresholds' movement when $\sigma^2$ is increasing.}
\label{zero table examples}
\begin{tabular}{|c|c|c|c|c|}
\hline
$\sigma^2$                                                                  & $0.01$   & $0.05$   & $0.1$    & $0.15$   \\ \hline
\begin{tabular}[c]{@{}c@{}}$\gamma$ if\\ $r=0.1,~\alpha=0.09$\end{tabular}  & $0.3702$ & $0.3645$ & $0.3598$ & $0.3566$ \\ \hline
\begin{tabular}[c]{@{}c@{}}$\gamma$ if\\ $r=\alpha=0.1$\end{tabular}        & $0.3384$ & $0.3384$ & $0.3384$ & $0.3384$ \\ \hline
\begin{tabular}[c]{@{}c@{}}$\gamma$ if \\ $r=0.1,~\alpha=0.11$\end{tabular} & $0.3102$ & $0.3142$ & $0.3179$ & $0.3207$ \\ \hline
\end{tabular}
\end{table}
 
\subsection{Two sides stopping region : $K<0$}
In this subsection, we assume that: $a=1,\text{ }b=10,\text{ }c=1,\text{ }f=8$ and $K=-5$. We fix $r,\alpha$ and $\sigma^2$ as in the previous subsection, although the condition $r>\sigma^2-\alpha$ is not necessary in this example. In figure \ref{figure 2} we plot $\Pi$. The major difference regarding the previous case is that, now, $\Pi$ crosses level zero twice. Thus, we expect the continuation region to be two-sided (as this corresponds to case c) of definition \ref{def of x*}): $D^c=]\delta,\beta[$.

\begin{figure}[!h]\center
\caption{A gross profit function $\Pi$ when $K< 0$.}\label{figure 2}
\vspace{0.5cm}
\includegraphics[scale=0.7]{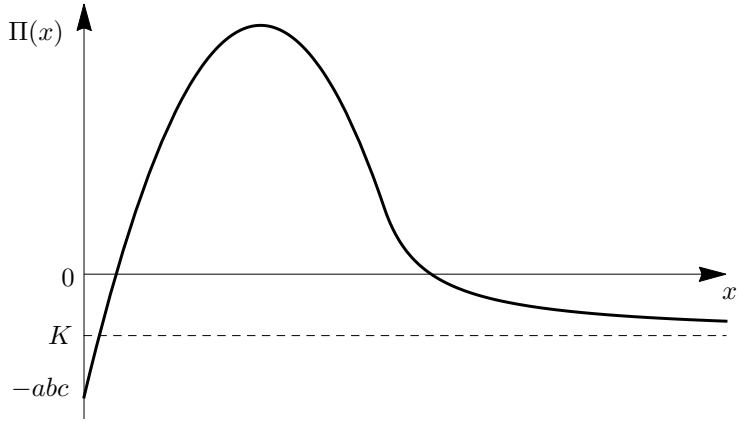}

\begin{picture}(160,10)

\put(-55,40){$K$}
\put(-70,155){$\Pi(x)$}
\put(200,57){$x$}
\put(-50,62){$0$}
\put(-69,20){$-abc$}
\end{picture}
\end{figure}
\vspace{-0.1cm}

According to \eqref{zeta, delta eq} in proposition \ref{x^* equation}, $(\delta,\beta)$ are solutions of the following integral equations:
\begin{align}\begin{cases}
&\int_{\delta}^{x_0}-cs^{-2}(s-1)(s-10)ds+\int_{x_0}^{\beta}s^{-2}\left(\frac{e}{x-8}-5\right)ds=0\\
&\int_{\delta}^{x_0}-cs(s-1)(s-10)ds+\int_{x_0}^{\beta}s\left(\frac{e}{x-8}-5\right) ds=0
\end{cases} \Rightarrow \begin{cases}
\delta\simeq 0.359353 \\
\beta \simeq 23.0984, \\
\end{cases}
\end{align}
where $x_0$ and $e$ are given as in  \eqref{x0}-\eqref{e}. 
Therefore, by applying theorem \ref{teo importante}, the  value function for the optimisation problem is:
\begin{equation}
\label{ex2}
V(x)= \begin{cases} 0 & x<\delta \text{ or } x>\beta\\
-\frac{20}{3}\left(15 - x\left(\frac{11}{3}+\frac{10}{\gamma}-\gamma\right) - \frac{3}{4} x^2-\frac{1}{x^2}\left(5\gamma^2-\frac{11}{3}\gamma^3+\frac{1}{4}\gamma^4\right)+ \right. & \\
\hspace{1.0cm} \left. +11x \left(\log(x) - \log(\gamma)\right)\right)  & \delta<x\leq x_0\\
-\frac{20}{3}(\frac{15}{2}+\frac{e}{8}+\log\left|\frac{\beta-8}{x-8}\right|\left(8ex^{-2}-\frac{e}{64}x\right)    +\frac{e}{64}x\log\left(\frac{\beta}{x}\right)+ \\
\hspace{1.0cm} +\frac{1}{x^2}\left(e\beta-\frac{5}{2}\beta^2\right)-ex^{-1} - x\left(5\beta^{-1}+\frac{e}{8}\beta^{-1}\right) &  x_0<x<\beta.
\end{cases}
\end{equation}

We finalise this section with a numerical illustration regarding the behaviour of the thresholds $\delta$ and $\beta$ with changing $\alpha$ and $\sigma^2$. Although we were not able to derive analytical results in the sensitivity analysis's section when $\Pi$ crosses level $0$ twice, the results here presented give us some clues about a more general case.

In table \ref{first table examples}, we represent the values of $\delta$ and $\beta$ for different values of $\alpha$, with positive interest rates, $r$. We remark that both $\delta$ and $\beta$ decrease with $\alpha$, as well as the amplitude of the continuation region. Given the shape of function $\Pi$ (the right-hand tail of $\Pi$ takes negatives values), this behaviour was expected, as higher $\alpha$ values also lead to a higher probability of having large values of $X$ and, consequently, bigger losses.

\begin{table}[h!]
\centering
\caption{Thresholds' movement when $\alpha$ is increasing, for $r=\sigma^2=0.1$}
\label{first table examples}
\begin{tabular}{|c|c|c|c|c|c|}
\hline
$\alpha$ & $-0.2$    & $-0.1$   & $0$      & $0.1$    & $0.2$    \\ \hline
$\delta$ & $0.819$   & $0.738$  & $0.571$  & $0.359$  & $0.220$  \\ \hline
$\beta$  & $143.199$ & $82.842$ & $42.218$ & $23.098$ & $16.649$ \\ \hline
\end{tabular}
\end{table}

A different behaviour is observed when the interest rate is negative, as one may see in table \ref{second table examples}. First of all, we notice that neither $\delta$ nor $\beta$ are monotonic with increasing $\alpha$, particularly when the pattern $\alpha<\frac{\sigma^2}{2}$ changes to $\alpha>\frac{\sigma^2}{2}$. Additionally, when $\alpha$ is between $0.2$ and $0.3$, the lower bound of the continuation region, $\delta$, is zero. This  was difficult to guess \textit{a priori}, since it means that it is never optimal to exit the project for small levels of the price. Finally, we note that, in this case, we have to exclude some values of $\alpha$ (notably, the ones between $-0.1$ and $0.2$) in order to meet condition \eqref{limite inferior par r}.

\begin{table}[h!]
\centering
\caption{Thresholds' movement when $\alpha$ is increasing, for $r=-0.1\text{ and }\sigma^2=0.1$}
\label{second table examples}
\begin{tabular}{|c|c|c|c|c|c|c|}
\hline
$\alpha$ & $-0.15$  & $-0.1$   & $0.2$    & $0.25$   & $0.3$    & $0.8$    \\ \hline
$\delta$ & $0.028$  & $0.013$  & $0$      & $0$      & $0$      & $0.002$  \\ \hline
$\beta$  & $12.139$ & $10.814$ & $23.093$ & $16.970$ & $15.090$ & $11.799$ \\ \hline
\end{tabular}
\end{table}

In the next two tables, we illustrate the behaviour of the thresholds $\delta$ and $\beta$, when $\sigma^2$ varies, for both positive and negative interest rates. In table \ref{third table examples}, we present results for the positive interest rate. We observe that both thresholds increase with $\sigma^2$, as well as the amplitude of the continuation region.

\begin{table}[h!]
\centering
\caption{Thresholds' movement when $\sigma^2$ is increasing, for $r=\alpha=0.1$}
\label{third table examples}
\begin{tabular}{|c|c|c|c|c|c|}
\hline
$\sigma^2$ & $0.1$    & $0.2$    & $0.3$    & $0.4$    & $0.5$     \\ \hline
$\delta$   & $0.359$  & $0.361$  & $0.3619$ & $0.3624$ & $0.3626$  \\ \hline
$\beta$    & $23.098$ & $41.985$ & $65.262$ & $91.678$ & $120.278$ \\ \hline
\end{tabular}
\end{table}

When the interest rate is negative (table \ref{fourth table examples}), for some levels of $\sigma^2$, $\delta=0$,  as in table \ref{second table examples}. Contrarily, when $\delta\neq 0$, $\delta$ increases with $\alpha$. Regarding $\beta$: it increases for \textit{low} values of $\sigma^2$ and then decreases. Although we were not able to prove it, it seems that $\beta$ increases with $\sigma^2$ while $\alpha>\frac{\sigma^2}{2}$, while decreasing otherwise. Please note the gap between $\sigma^2=0.1$ and $\sigma^2=1.9$. The reason for this gap is the same as for the gap observed in table \ref{second table examples}, and it is related with condition \eqref{limite inferior par r}.

\begin{table}[h!]
\centering
\caption{Thresholds' movement when $\sigma^2$ is increasing, for $r=-0.1\text{ and }\alpha=0.3$}
\label{fourth table examples}
\begin{tabular}{|c|c|c|c|c|c|c|c|}
\hline
$\sigma^2$ & $0.01$   & $0.05$   & $0.07$    & $0.1$    & $1.9$    & $2$      & $4$     \\ \hline
$\delta$   & $0$      & $0$      & $0$       & $0$      & $0.216$  & $0.227$  & $0.305$ \\ \hline
$\beta$    & $11.014$ & $12.307$ & $13.2431$ & $15.085$ & $770018$ & $143920$ & $4231$  \\ \hline
\end{tabular}
\end{table}
%
%
%
\appendix
\section{Auxiliary results to section \ref{Solution of the Optimal Stopping Time}}
\begin{prop}\label{auxiliar para condicoes integrais}
Let $\Pi:]0,\infty[\to]0,\infty[$ be a Borel measurable function, such that assumptions \ref{A3}-\ref{A1} hold true. Assuming that, in assumption \ref{A1}, $\Pi$ is such that
\begin{itemize}
\item[1)]$0<x_{1l}<x_{2r}=\infty$, then $\lim\limits_{x\to\infty}x^{d_1-d_2}\int_\nu^xs^{-d_1-1}\Pi(s)ds=0$;
\item[2)]$0=x_{1l}<x_{2r}<\infty$, then $\lim\limits_{x\downarrow 0}x^{d_2-d_1}\int_x^\nu s^{-d_2-1}\Pi(s)ds=0$.
\end{itemize}
\end{prop}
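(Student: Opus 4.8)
The plan is to prove both limits by the same $\varepsilon$-splitting argument, exploiting that the prefactors vanish (since $d_1<d_2$ we have $x^{d_1-d_2}\to 0$ as $x\to\infty$ and $x^{d_2-d_1}\to 0$ as $x\downarrow 0$) together with the integrability furnished by Assumption \ref{A2} via Remark \ref{l1}. I would treat part 1) in detail, part 2) being the mirror image under the exchange of the roles $d_1\leftrightarrow d_2$ and $0\leftrightarrow\infty$.

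First I would observe that, since in part 1) we have $x_{2r}=\infty$, Assumption \ref{A1} forces $\Pi(s)\geq 0$ for all $s>x_{1r}$; hence, enlarging $\nu$ if necessary (the discarded piece $\int_\nu^{\nu'}s^{-d_1-1}\Pi(s)\,ds$ is a constant, which is killed by the factor $x^{d_1-d_2}\to 0$), I may assume $\Pi\geq 0$ on $[\nu,\infty[$. Next, Assumption \ref{A2} together with Remark \ref{l1} gives $s\mapsto s^{-d_2-1}\Pi(s)\in L^1(]\nu,\infty[)$, so the tail $\int_M^\infty s^{-d_2-1}\Pi(s)\,ds$ can be made smaller than any prescribed $\varepsilon>0$ by choosing $M$ large enough.

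Then, for $x>M$, I would split $\int_\nu^x s^{-d_1-1}\Pi(s)\,ds=\int_\nu^M s^{-d_1-1}\Pi(s)\,ds+\int_M^x s^{-d_1-1}\Pi(s)\,ds$. The first integral is a constant, so multiplying it by $x^{d_1-d_2}\to 0$ yields a vanishing term. For the second, I would write $s^{-d_1-1}=s^{d_2-d_1}s^{-d_2-1}$ and use that, since $d_2-d_1>0$ and $s\leq x$, one has $s^{d_2-d_1}\leq x^{d_2-d_1}$; because $\Pi\geq 0$ on $[M,x]$, this gives
\[
\int_M^x s^{-d_1-1}\Pi(s)\,ds\leq x^{d_2-d_1}\int_M^x s^{-d_2-1}\Pi(s)\,ds\leq x^{d_2-d_1}\,\varepsilon .
\]
Multiplying by $x^{d_1-d_2}$ bounds this contribution by $\varepsilon$, so $\limsup_{x\to\infty}x^{d_1-d_2}\int_\nu^x s^{-d_1-1}\Pi(s)\,ds\leq\varepsilon$, and letting $\varepsilon\downarrow 0$ gives the claim.

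For part 2) I would run the symmetric argument: near $0$, Assumption \ref{A1} (now with $x_{1l}=0$) gives $\Pi\geq 0$ on $]0,x_{1r}]$, Remark \ref{l1} gives $s\mapsto s^{-d_1-1}\Pi(s)\in L^1(]0,\nu[)$, I split at a small $m$ with $\int_0^m s^{-d_1-1}\Pi(s)\,ds<\varepsilon$, and use the reversed monotonicity $s^{d_1-d_2}\leq x^{d_1-d_2}$ for $x\leq s$ (valid since $d_1-d_2<0$) to bound the near-$0$ piece by $x^{d_1-d_2}\varepsilon$, the remaining constant piece being killed by $x^{d_2-d_1}\to 0$. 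The main obstacle is exactly the indeterminate $\infty\cdot 0$ character of each limit: the inner integral may itself diverge, because $v_p^+<\infty$ controls only the $d_2$-weighted integral at infinity and the $d_1$-weighted integral at $0$, whereas the integrals appearing inside use the opposite weights. The $\varepsilon$-split above is what converts this into a clean estimate, avoiding an \emph{L'Hôpital} argument that would require the (generally nonexistent) limit of $\Pi(x)/x^{d_2}$.
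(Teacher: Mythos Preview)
Your proof is correct and follows essentially the same route as the paper's: the key step in both is the algebraic identity $s^{-d_1-1}=s^{d_2-d_1}s^{-d_2-1}$ combined with the $L^1$-bound on $s^{-d_2-1}\Pi^+(s)$ at infinity (respectively $s^{-d_1-1}\Pi^+(s)$ at $0$) supplied by Assumption~\ref{A2} via Remark~\ref{l1}. The paper's proof records only these two ingredients and declares the result trivial; your $\varepsilon$-splitting argument is exactly the natural way to make that ``trivial'' step rigorous, and in particular it correctly handles the case where the inner integral itself diverges.
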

\begin{proof}
Since $v_p^+(x)<\infty$, a characterization for $\Pi^+$, such as in remark \ref{l1} is admissible. Furthermore, in case 1) $s^{-d_1-1}\Pi(s)=s^{d_2-d_1}s^{-d_2-1}\Pi(s)$ and $s^{-d_2-1}\Pi(s)=s^{d_1-d_2}s^{-d_1-1}\Pi(s)$. Thus, the result follows trivially.
\end{proof}

The following is a corollary of proposition \ref{auxiliar para condicoes integrais}, which gives an upper bound to $V$, given as in \eqref{value function}. Indeed, this result will be useful to prove theorem \ref{teo importante}.
\begin{corollary}\label{envelopes para v}
Suppose that assumptions \ref{A3}-\ref{A1} hold true, $V$ is given as in \eqref{value function} and $x^*$ is given by one of the conditions \eqref{gamma eq}-\eqref{zeta, delta eq}. Then, there are constants $b>0$ and $\beta_1,\beta_2\in\mathbb{R}$, such that if $\Pi$ verifies in assumption \ref{A1}:
\begin{itemize}
\item[1)] $0<x_{1l}<x_{2r}=\infty$, then $v(x)\leq b x^{\beta_2}$, for $\beta_2<d_2$; 
\item[2)] $0=x_{1l}<x_{2r}<\infty$, then  $v(x)\leq b x^{\beta_1}$, for $\beta_1>d_1$. 
\end{itemize}
\end{corollary}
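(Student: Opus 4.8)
The plan is to establish case~1) in full and to obtain case~2) by the symmetric argument, interchanging the roles of $d_{1},d_{2}$ and of the endpoints $0$ and $\infty$. In case~1) one has $x^{*}=\gamma$ and ${\cal D}^{c}=\,]\gamma,\infty[$, so on the stopping region ${\cal D}=\,]0,\gamma]$ we have $V=0$, and since $x^{\beta_{2}}>0$ for every $x>0$ the asserted inequality holds there for any $b>0$; hence it suffices to control $V$ on $]\gamma,\infty[$. First I would split the kernel in \eqref{value function} and use the defining relation \eqref{gamma eq}, $\int_{\gamma}^{\infty}s^{-d_{2}-1}\Pi(s)\,ds=0$, to convert $x^{d_{2}}\int_{\gamma}^{x}s^{-d_{2}-1}\Pi$ into the convergent tail $-x^{d_{2}}\int_{x}^{\infty}s^{-d_{2}-1}\Pi$, so that $V(x)=\frac{2}{\sigma^{2}(d_{2}-d_{1})}\bigl(x^{d_{2}}\int_{x}^{\infty}s^{-d_{2}-1}\Pi(s)\,ds+x^{d_{1}}\int_{\gamma}^{x}s^{-d_{1}-1}\Pi(s)\,ds\bigr)$ for $x>\gamma$, where the prefactor is positive because $d_{1}<d_{2}$.

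I would then fix an exponent $\beta_{2}\in\,]d_{1},d_{2}[$ and bound the two terms by a single constant times $x^{\beta_{2}}$. Since $\Pi\geq 0$ on $[x_{1l},\infty[$, for $x>x_{1l}$ both integrands may be replaced by those of $\Pi^{+}$ (this leaves the first integral unchanged and only enlarges the second, which is what we want for an upper bound). For the tail term I compare powers on $]x,\infty[$: because $\beta_{2}-d_{2}<0$ and $s\geq x$, we have $s^{-d_{2}-1}=s^{\beta_{2}-d_{2}}s^{-\beta_{2}-1}\leq x^{\beta_{2}-d_{2}}s^{-\beta_{2}-1}$, whence $x^{d_{2}}\int_{x}^{\infty}s^{-d_{2}-1}\Pi^{+}\leq x^{\beta_{2}}\int_{\gamma}^{\infty}s^{-\beta_{2}-1}\Pi^{+}(s)\,ds$. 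For the remaining term I compare on $]\gamma,x[$: because $\beta_{2}-d_{1}>0$ and $s\leq x$, $s^{-d_{1}-1}\leq x^{\beta_{2}-d_{1}}s^{-\beta_{2}-1}$, so $x^{d_{1}}\int_{\gamma}^{x}s^{-d_{1}-1}\Pi^{+}\leq x^{\beta_{2}}\int_{\gamma}^{\infty}s^{-\beta_{2}-1}\Pi^{+}(s)\,ds$. Writing $C:=\int_{\gamma}^{\infty}s^{-\beta_{2}-1}\Pi^{+}(s)\,ds$, this yields $V(x)\leq bx^{\beta_{2}}$ with $b=\frac{4C}{\sigma^{2}(d_{2}-d_{1})}$ for all $x>x_{1l}$; on the compact piece $]\gamma,x_{1l}]$ the continuity of $V$ (the Filippov regularity discussed after \eqref{ode}) together with $\inf_{]\gamma,x_{1l}]}x^{\beta_{2}}>0$ lets me enlarge $b$ to cover that range as well.

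The whole argument hinges on the finiteness of $C=\int_{\gamma}^{\infty}s^{-\beta_{2}-1}\Pi^{+}(s)\,ds$ for some $\beta_{2}<d_{2}$. On the bounded part $]\gamma,x_{1l}[$ this is immediate from $\Pi\in L^{1}_{loc}$ (Assumption~\ref{A3}); near $+\infty$, however, it demands that the tail of $\Pi$ be genuinely lighter than $s^{d_{2}}$, which is strictly more than the bare integrability $v_{p}^{+}(x)<\infty$ of Assumption~\ref{A2} (the latter only controls the critical exponent $d_{2}$ itself). Securing such a subcritical $\beta_{2}$ is exactly where the standing hypothesis $\gamma>0$ and the non-triviality encoded in remark~\ref{l1} and in the limits of Proposition~\ref{auxiliar para condicoes integrais} must be used, and I expect this moment estimate to be the main technical obstacle. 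Case~2), where $x^{*}=\zeta$, ${\cal D}^{c}=\,]0,\zeta[$, and \eqref{zeta eq} gives $\int_{0}^{\zeta}s^{-d_{1}-1}\Pi(s)\,ds=0$, is handled symmetrically: one writes $V(x)=\frac{2}{\sigma^{2}(d_{2}-d_{1})}\bigl(x^{d_{2}}\int_{x}^{\zeta}s^{-d_{2}-1}\Pi+x^{d_{1}}\int_{0}^{x}s^{-d_{1}-1}\Pi\bigr)$, fixes $\beta_{1}\in\,]d_{1},d_{2}[$, and runs the same power comparison (now with $s\geq x$ against $d_{2}$ on $]x,\zeta[$ and $s\leq x$ against $d_{1}$ on $]0,x[$) to obtain $V(x)\leq bx^{\beta_{1}}$ with $\beta_{1}>d_{1}$, the obstacle being the analogous control of $\Pi$ near the origin.
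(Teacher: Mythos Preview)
Your route differs substantially from the paper's. The paper does not introduce a subcritical exponent at all: it simply writes
\[
\frac{v(x)}{x^{d_{2}}}=\frac{-2}{\sigma^{2}(d_{2}-d_{1})}\Bigl(\int_{\gamma}^{x}s^{-d_{2}-1}\Pi(s)\,ds-x^{d_{1}-d_{2}}\int_{\gamma}^{x}s^{-d_{1}-1}\Pi(s)\,ds\Bigr)
\]
and lets $x\to\infty$; the first term tends to $\int_{\gamma}^{\infty}s^{-d_{2}-1}\Pi=0$ by \eqref{gamma eq} and the second tends to $0$ by Proposition~\ref{auxiliar para condicoes integrais}, giving $v(x)=o(x^{d_{2}})$. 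The paper then appeals to the smoothness of $v$ to conclude. No power comparison and no constant $C$ of the type you introduce appear.

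Your constructive bound, by contrast, hinges on $C=\int_{\gamma}^{\infty}s^{-\beta_{2}-1}\Pi^{+}(s)\,ds<\infty$ for some $\beta_{2}<d_{2}$, and this is precisely the step that cannot be completed under the paper's hypotheses. Assumption~\ref{A2} (via Remark~\ref{l1}) guarantees only the critical-exponent integrability $\int^{\infty}s^{-d_{2}-1}\Pi^{+}<\infty$; the tail $\Pi^{+}(s)\sim s^{d_{2}}/(\log s)^{2}$ satisfies this while $C=\infty$ for \emph{every} $\beta_{2}<d_{2}$. Neither Remark~\ref{l1} nor Proposition~\ref{auxiliar para condicoes integrais} supplies the extra moment you need---the former characterises integrability at the critical exponent only, and the latter is a limit statement about $x^{d_{1}-d_{2}}\int_{\nu}^{x}s^{-d_{1}-1}\Pi$, not an improved integrability of $\Pi^{+}$ against a lighter weight. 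So the ``main technical obstacle'' you isolate is not merely technical: along your route it is insurmountable in this generality. (It is worth observing that the paper's own last sentence---passing from $v=o(x^{d_{2}})$ and smoothness to a strict bound $v\le bx^{\beta_{2}}$ with $\beta_{2}<d_{2}$---is vulnerable to exactly the same counterexample; your analysis has in effect located the same soft spot that the paper's argument glosses over.)
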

\begin{proof}
Since the proofs of 1) and 2) require similar arguments, we will just prove 1). With trivial calculations, we obtain
\begin{align}\label{polinomio}
\lim\limits_{x\to\infty}\frac{v(x)}{x^{d_2}}=\frac{-2}{\sigma^2(d_2-d_1)}\lim_{x\to\infty}x^{d_1-d_2}\int_{x^*}^{x}s^{-d_1-1}\Pi(s)ds=0,
\end{align}
where the last equality  follows from proposition \ref{auxiliar para condicoes integrais}. The result follows, combining \eqref{polinomio} with the smoothness of the function $v$.
\end{proof}

\begin{prop}\label{HJB eq solution} 
Let $V\in C^1(]0,\infty[)$, with $V'\in AC(]0,\infty[)$, given in the same conditions of theorem \ref{teo importante}. Then, $V$ is a solution to the HJB equation \eqref{HJB}.
\end{prop}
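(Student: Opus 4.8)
The plan is to unpack the single identity \eqref{HJB} into its three constituent requirements --- that $V\geq 0$, that $-{\cal L}V-\Pi\geq 0$, and that at each point at least one of these holds with equality --- and to verify them separately on the stopping region ${\cal D}$ and the continuation region ${\cal D}^c$ of definition \ref{def of x*}. On ${\cal D}^c$ the formula \eqref{value function} is, by the ODE analysis leading to remark \ref{solution to the ode}, exactly the Carath\'eodory solution of $-{\cal L}v-\Pi=0$ with initial data $v(x^*)=v'(x^*)=0$; hence the first entry of the minimum vanishes (a.e., and then throughout the open region by the assumed $C^1$, $v'\in AC$ regularity), and it only remains to check $V\geq 0$ there. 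On ${\cal D}$ we have $V\equiv 0$, so ${\cal L}V=0$ on its interior and $-{\cal L}V-\Pi=-\Pi$; the second entry of the minimum vanishes, and it only remains to check $\Pi\leq 0$.

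Thus the whole proof reduces to two sign statements. For the sign of $\Pi$ on ${\cal D}$ I would first locate the thresholds relative to the sign changes of $\Pi$. Writing $K_C(s)=\bigl((C/s)^{d_2}-(C/s)^{d_1}\bigr)/s$, note that for $s<C$ one has $C/s>1$ and $d_1<d_2$, so $K_C(s)>0$. Picking any $x_0\in\,]x_{1r},x_{2l}[$, where $\Pi>0$, and any $C\in\,]x_0,x_{2l}[$ gives $\int_{x_0}^{C}K_C(s)\Pi(s)\,ds>0$, so $x_0$ lies in the set defining $\gamma$ in \eqref{gamma condition}; hence $\gamma\leq x_{1r}$, and assumption \ref{A1} forces $\Pi\leq 0$ on $]0,\gamma]={\cal D}$. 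The cases \eqref{zeta condition} and \eqref{delta beta condition} are symmetric, yielding $\zeta\geq x_{2l}$ and $\delta\leq x_{1r}$, $\beta\geq x_{2l}$, so that $\Pi\leq 0$ on the corresponding stopping regions. For the sign of $V$ on ${\cal D}^c$ I would use that \eqref{value function} reads $V(x)=\frac{-2}{\sigma^2(d_2-d_1)}\Phi(x^*,x)$, with $\Phi(a,x)=\int_a^{x}K_x(s)\Pi(s)\,ds$ and strictly negative prefactor; by the infimum characterisation of $\gamma$, for every $a<\gamma$ and every $C>a$ one has $\Phi(a,C)<0$, and since $a\mapsto\Phi(a,x)$ is continuous (assumption \ref{A3} together with $\gamma>0$), letting $a\uparrow\gamma$ yields $\Phi(\gamma,x)\leq 0$, whence $V\geq 0$ on $]\gamma,\infty[$. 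The analogue holds in case \eqref{zeta condition}.

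Finally I would glue the regions: at each interior point of ${\cal D}$ one gets $\min\{-\Pi,0\}=0$ and at each interior point of ${\cal D}^c$ one gets $\min\{0,V\}=0$, while the boundary points $\gamma,\zeta,\delta,\beta$ are covered because $V$ is continuously differentiable there and both entries of the minimum are one-sided continuous. I expect the genuine obstacle to be the two-sided case \eqref{delta beta condition}: there the continuation region is bounded, the kernels defining $\delta$ and $\beta$ carry opposite signs, and the nonnegativity of $V$ on $]\delta,\beta[$ must be extracted from the two simultaneous equations \eqref{zeta, delta eq}, which are precisely what force $V(\delta)=V'(\delta)=V(\beta)=V'(\beta)=0$ and hence the $C^1$ pasting at both ends. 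This part I would organise through lemmas \ref{A.1} and \ref{A.2} rather than by the single one-sided limiting argument used above.
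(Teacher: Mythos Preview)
Your decomposition into the two sign checks ($V\geq 0$ on ${\cal D}^c$ and $\Pi\leq 0$ on ${\cal D}$) and your continuity-in-the-initial-point argument for the former are precisely the paper's strategy; your positive limiting argument $a\uparrow\gamma$ is just the paper's contradiction argument stated contrapositively, and your direct localisation $\gamma\leq x_{1r}$ via a point in $]x_{1r},x_{2l}[$ is, if anything, tidier than the paper's one-line claim that $\Pi(x^*)<0$.

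Two corrections are needed. First, your appeal to lemmas \ref{A.1} and \ref{A.2} for the two-sided case \eqref{delta beta condition} is a misfire: those lemmas compute the asymptotics of $E_x\bigl[e^{-rt}X(t)^\beta\,{\cal I}_{\{\cdot\}}\bigr]$ and are invoked in the paper solely to verify the transversality condition \eqref{condi??o auxiliar} of the verification theorem, not the HJB inequality. For case c) the paper uses exactly the same continuity/contradiction device as in cases a) and b), perturbing $x^*$ to $x'<\delta$ or $x'>\beta$ and appealing to definition \ref{def of x*}; no new machinery enters. Second, you do not address point 2) of theorem \ref{teo importante}, the degenerate situation ${\cal D}^c=]0,\infty[$ with $V=v_p$. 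There the paper rewrites $v_p$ in the form
\[
v_p(x)=\frac{2}{\sigma^2(d_2-d_1)}\Bigl(x^{d_2}\!\int_0^\infty s^{-d_2-1}\Pi(s)\,ds-\int_0^x\frac{(x/s)^{d_2}-(x/s)^{d_1}}{s}\Pi(s)\,ds\Bigr)
\]
(and its mirror image), and then uses the fact that the failure of \eqref{gamma eq}--\eqref{zeta eq} forces the coefficient $\int_0^\infty s^{-d_i-1}\Pi(s)\,ds$ to keep a definite sign, together with definition \ref{def of x*}, to conclude $v_p(x)>0$ for all $x>0$; the ODE identity $-{\cal L}v_p=\Pi$ is then quoted from Knudsen--Meister--Zervos.
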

\begin{proof} We start by proving the proposition assuming that $V$ is given as in point 1) of the theorem \ref{teo importante}. In order to do this, we prove that $V$ is non-negative, by contradiction. Therefore, we assume that $V(x)=\frac{-2}{\sigma^2(d_2-d_1)}\int_{x^*}^x\frac{\left(\frac{x}{s}\right)^{d_2}-\left(\frac{x}{s}\right)^{d_1}}{s}\Pi(s)ds<0$, for some $x\in D^c$, meaning that  $\int_{x^*}^x\frac{\left(\frac{x}{s}\right)^{d_2}-\left(\frac{x}{s}\right)^{d_1}}{s}\Pi(s)ds>0$, as $d_2-d_1>0$. As the solution to the ODE \eqref{ode} is continuous in the initial value $x^*$, then, there exists $\epsilon>0$, such that $\int_{x'}^x\frac{\left(\frac{x}{s}\right)^{d_2}-\left(\frac{x}{s}\right)^{d_1}}{s}\Pi(s)ds>0$, for all $x'\in B_{\epsilon}(x^*)$, where $B_{\epsilon}(x^*)$ is a ball of centre  $x^*$ and radius  $\epsilon$. Following definition \ref{def of x*}, when choosing $x'<\gamma$ in case a), $x'>\zeta$ in case b) or either $x'<\delta$ or $x'>\beta$  in case c), we get a contradiction. Thus,  $V(x)\geq 0$ for all  $x>0$. 

Next, we prove that, for $x \in  D$, $-{\cal L}v(x)-\Pi(x)\geq 0$, which is equivalent to prove that $\Pi(x)\leq 0$, when $x \in  D$. We start by assuming that we are in case a) of definition \ref{def of x*}. In that case, 
as $\frac{\left(\frac{x}{s}\right)^{d_2}-\left(\frac{x}{s}\right)^{d_1}}{s}>0$ for $x<s<x^*$, it follows that $\Pi(x^*)<0$. Otherwise, we would have a contradiction with the definition of $\gamma$ (see \eqref{gamma condition}). The cases b) and c) follow similarly, with the obvious changes.

If the function $V$ is defined as in the point 2) of theorem \ref{teo importante}, we note that $V$ admits the useful representation 
\begin{align}
V(x)&=\frac{2}{\sigma^2(d_2-d_1)}\left(x^{d_2}\int_{0}^{\infty}s^{-d_2-1}\Pi(s)ds-\int_{0}^x\frac{\left(\frac{x}{s}\right)^{d_2}-\left(\frac{x}{s}\right)^{d_1}}{s}\Pi(s)ds\right),\text{ for } x>0,
\end{align}
when the function $\Pi$ is such that $0<x_{1l}<x_{2r}=\infty$ in assumption \ref{A1} or
\begin{align} 
V(x)&=\frac{2}{\sigma^2(d_2-d_1)}\left(\int_{x}^\infty\frac{\left(\frac{x}{s}\right)^{d_2}-\left(\frac{x}{s}\right)^{d_1}}{s}\Pi(s)ds-x^{d_1}\int_{0}^{\infty}s^{-d_1-1}\Pi(s)ds\right),\text{ for } x>0,
\end{align}
when the function $\Pi$ is such that $0=x_{1l}<x_{2r}<\infty$ in assumption \ref{A1}. Combining these representations with definition \ref{def of x*} and proposition \ref{x^* equation}, we obtain $V(x)>0$ for all $x>0$. Then, the result follows from proposition 4.1 in Knudsen, Meister and Zervos \cite{knudsen1998valuation}. 
 
Finally, taking into account the comments about the regularity of the solution to the ODE \eqref{ode}, one must have $V\in C^1(]0,\infty[)$, with $V'\in AC(]0,\infty[)$.
\end{proof}

The following lemmas are related to the regularity condition \eqref{condi??o auxiliar}. This condition is crucial when one intends to prove that a function  $V\in C^1(]0,\infty[)$ with $V'\in AC(]0,\infty[)$, which is a solution to the HJB equation \eqref{HJB}, is, indeed, the value function to the optimal stopping problem \eqref{optimal00}.
\begin{lemma}\label{A.1}
Let  $X$ be a GBM satisfying \eqref{GBM} and consider $\beta\in\mathbb{R}$ and $a,b\in\mathbb{R}^+$. Then, the following equalities are true:
\begin{align*}
&E_x\left[e^{-rt}X_t^{\beta}{\cal I}_{\{X(t)>a\}}\right]=x^{\beta}e^{P(\beta)t}\left[1-H(t;a)\right];\\
&E_x\left[e^{-rt}X_t^{\beta}{\cal I}_{\{X(t)<b\}}\right]=x^{\beta}e^{P(\beta)t}H(t;b);\\
&E_x\left[e^{-rt}X_t^{\beta}{\cal I}_{\{a<X(t)<b\}}\right]=x^{\beta}e^{P(\beta)t}\left[H(t;b)-H(t;a)\right],
\end{align*}
where
\begin{equation}\label{auxiliar function}
H(t)=\Phi\left(\frac{1}{\sigma\sqrt{t}}ln\left(\frac{b}{x}\right)+\sigma\left(\frac{d_1+d_2}{2}-\beta\right)\sqrt{t}\right),
\end{equation} in which $\Phi$ is the cumulative density function of the standard normal distribution and $P$  is the characteristic polynomial defined in \eqref{characteristic polynomial}.
\end{lemma}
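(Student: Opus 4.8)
The plan is to reduce all three identities to a single Gaussian computation and then derive them from one another. First I would use the explicit solution of \eqref{GBM}, namely $X(t)=x\exp\left(\left(\alpha-\tfrac{\sigma^2}{2}\right)t+\sigma W(t)\right)$, so that $X_t^{\beta}=x^{\beta}\exp\left(\beta\left(\alpha-\tfrac{\sigma^2}{2}\right)t+\beta\sigma W(t)\right)$, and, since $W(t)\sim N(0,t)$, substitute $W(t)=\sqrt{t}\,Z$ with $Z\sim N(0,1)$. I would prove the middle identity (the one with $\{X(t)<b\}$) first: the event $\{X(t)<b\}$ becomes $\{Z<c\}$ with $c=\tfrac{1}{\sigma\sqrt{t}}\ln\tfrac{b}{x}-\tfrac{1}{\sigma}\left(\alpha-\tfrac{\sigma^2}{2}\right)\sqrt{t}$, so that
\begin{equation*}
E_x\!\left[e^{-rt}X_t^{\beta}{\cal I}_{\{X(t)<b\}}\right]=e^{-rt}x^{\beta}e^{\beta(\alpha-\sigma^2/2)t}\int_{-\infty}^{c}e^{\beta\sigma\sqrt{t}\,z}\frac{e^{-z^2/2}}{\sqrt{2\pi}}\,dz.
\end{equation*}

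Next I would complete the square in the exponent, writing $\beta\sigma\sqrt{t}\,z-\tfrac{z^2}{2}=-\tfrac{1}{2}\left(z-\beta\sigma\sqrt{t}\right)^2+\tfrac{\beta^2\sigma^2 t}{2}$. The constant term combines with the prefactor to give $e^{-rt}x^{\beta}\exp\left(\beta\left(\alpha-\tfrac{\sigma^2}{2}\right)t+\tfrac{\beta^2\sigma^2}{2}t\right)=x^{\beta}e^{P(\beta)t}$, where the identification of the exponent with $P(\beta)t$ is exactly the definition of the characteristic polynomial \eqref{characteristic polynomial}. After the shift $u=z-\beta\sigma\sqrt{t}$, the integral reduces to $\Phi\left(c-\beta\sigma\sqrt{t}\right)$. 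It then remains to verify that $c-\beta\sigma\sqrt{t}$ coincides with the argument of $\Phi$ in \eqref{auxiliar function}; this is where the reparametrization \eqref{reparametrization} enters, through $\alpha-\tfrac{\sigma^2}{2}=-\tfrac{\sigma^2}{2}(d_1+d_2)$, which turns $-\tfrac{1}{\sigma}\left(\alpha-\tfrac{\sigma^2}{2}\right)\sqrt{t}$ into $\tfrac{\sigma(d_1+d_2)}{2}\sqrt{t}$ and yields precisely $H(t;b)$, establishing the second identity.

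Finally, the remaining two identities follow with no further integration. Letting $b\to\infty$ (equivalently, removing the indicator) gives the unconditional expectation $E_x[e^{-rt}X_t^{\beta}]=x^{\beta}e^{P(\beta)t}$, and since $\{X(t)>a\}$ is the complement of $\{X(t)\leq a\}$, with $\{X(t)=a\}$ having probability zero, subtracting the second identity evaluated at $a$ produces the first, $x^{\beta}e^{P(\beta)t}\left[1-H(t;a)\right]$. The third identity then follows from ${\cal I}_{\{a<X(t)<b\}}={\cal I}_{\{X(t)<b\}}-{\cal I}_{\{X(t)<a\}}$ and linearity of the expectation. I do not anticipate any genuine obstacle: the argument is a routine Gaussian integral, and the only step requiring care is the algebraic reconciliation of the argument of $\Phi$ with the defined form of $H$ in \eqref{auxiliar function}, where one must apply \eqref{reparametrization} consistently.
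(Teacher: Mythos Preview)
Your proposal is correct and follows essentially the same route as the paper: both arguments write the expectation as a Gaussian integral, complete the square (equivalently, the paper's change of variable $u=\frac{\omega-\sigma\beta t}{\sqrt{t}}$) to extract the factor $x^{\beta}e^{P(\beta)t}$, and then identify the remaining integral with $\Phi$ using the reparametrization \eqref{reparametrization}. The only cosmetic difference is that the paper treats the three cases simultaneously by starting from a generic set $A$ and specializing, whereas you compute the case $\{X(t)<b\}$ and deduce the other two by complementarity and subtraction.
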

\begin{proof}
Let $A\subset]0,\infty[$ be an open set with strictly positive measure. Thus,
\begin{equation}\label{expectation}
E_x\left[e^{-rt}X^{\beta}(t){\cal I}_{\{X(t)\in A\}}\right]=x^{\beta}e^{P(\beta)t}\int_{\left\{\omega ~:~ xe^{-\frac{\sigma^2}{2}(d_1+d_2)t+\sigma \omega}\in A\right\}}\frac{e^{-\frac{(\omega-\beta\sigma t)^2}{2t}}}{\sqrt{2\pi t}}d\omega,
\end{equation}
the equality following in light of the parametrization, \eqref{reparametrization}, and the definition of the characteristic polynomial $P$, \eqref{polinomio}. By fixing  $A=]a,\infty[$, $A=]0,b[$ or $A=]a,b[$, and by using the change of variable $u=\frac{\omega-\sigma\beta t}{\sqrt{t}}$, we obtain the intended results.
\end{proof}

\begin{lemma}\label{A.2}
Let $X$ be a GBM satisfying \eqref{GBM} and consider $\beta\in\mathbb{R}$ and $a,b\in\mathbb{R}^+$.The following statements are true:
\begin{itemize}
\item[1)]If $\beta<d_2$, then $\lim\limits_{t\to\infty}x^{\beta}e^{P(\beta)t}\left[1-H(t;a)\right]=0$;
\item[2)]If $\beta>d_1$, then $\lim\limits_{t\to\infty}x^{\beta}e^{P(\beta)t}H(t;b)=0$;
\item[3)]If $\beta=0$, then $\lim\limits_{t\to\infty}=x^{\beta}e^{P(\beta)t}\left[H(t;b)-H(t;a)\right]=0$,
\end{itemize}
where $H$ is defined in lemma \ref{A.1}, and $P$ is the characteristic polynomial defined in \eqref{characteristic polynomial}. 
\end{lemma}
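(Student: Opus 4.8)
The plan is to reduce all three limits to a single algebraic identity for the exponential rates, after recording how the argument of $\Phi$ behaves. First I would rewrite the characteristic polynomial in factored form: using the reparametrization \eqref{reparametrization}, a direct computation gives $P(\beta)=\frac{\sigma^2}{2}(\beta-d_1)(\beta-d_2)$. Since $d_1<d_2$ and $\frac{\sigma^2}{2}>0$, this shows $P(\beta)<0$ for $d_1<\beta<d_2$, $P(\beta)=0$ for $\beta\in\{d_1,d_2\}$, and $P(\beta)>0$ for $\beta<d_1$ or $\beta>d_2$. Next I would examine the argument of $\Phi$ in \eqref{auxiliar function}: as $t\to\infty$ the term $\frac{1}{\sigma\sqrt{t}}\log(b/x)$ vanishes while $\sigma\left(\frac{d_1+d_2}{2}-\beta\right)\sqrt{t}$ dominates, so $H(t;\cdot)\to 1$ when $\beta<\frac{d_1+d_2}{2}$ and $H(t;\cdot)\to 0$ when $\beta>\frac{d_1+d_2}{2}$.

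With these facts I would split each statement according to the sign of $P(\beta)$. When $P(\beta)<0$ the factors $1-H$, $H$ and $|H(t;b)-H(t;a)|$ are all bounded by $1$, so $e^{P(\beta)t}$ forces the product to $0$; this disposes of the sub-range $d_1<\beta<d_2$ in statements 1) and 2), and of the case $d_1<0<d_2$ (equivalently $r>0$) in statement 3). When $P(\beta)=0$ the exponential is constant and I only need the bracket to vanish: for 1) this is $\beta=d_1<\frac{d_1+d_2}{2}$, hence $1-H(t;a)\to 0$; for 2) it is $\beta=d_2>\frac{d_1+d_2}{2}$, hence $H(t;b)\to 0$; for 3) it is $r=0$, and I would apply the mean value theorem to write $H(t;b)-H(t;a)=\phi(\xi_t)\,\frac{\log(b/a)}{\sigma\sqrt{t}}$ (with $\phi=\Phi'$ the standard normal density), which tends to $0$ because $\phi$ is bounded and the increment vanishes.

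The main obstacle is the regime $P(\beta)>0$, an $\infty\cdot 0$ indeterminacy, and here is where the identity does the work. In statement 1) one has $\beta<d_1<\frac{d_1+d_2}{2}$, so the argument $z(t)$ of $\Phi$ diverges to $+\infty$ and the Gaussian tail estimate $1-\Phi(z)\sim\frac{\phi(z)}{z}$ gives $1-H(t;a)\sim\frac{C}{\sqrt{t}}\exp\!\left(-\frac{\sigma^2}{2}\left(\frac{d_1+d_2}{2}-\beta\right)^2 t\right)$; symmetrically, in statement 2) one has $\beta>d_2>\frac{d_1+d_2}{2}$, $z(t)\to-\infty$, and $\Phi(z)\sim\frac{\phi(z)}{|z|}$ yields the same type of estimate for $H(t;b)$. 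Multiplying by $e^{P(\beta)t}$, the combined exponential rate is controlled by
\begin{equation*}
P(\beta)-\frac{\sigma^2}{2}\left(\frac{d_1+d_2}{2}-\beta\right)^2=-\frac{\sigma^2(d_2-d_1)^2}{8},
\end{equation*}
a strictly negative constant independent of $\beta$ (this follows immediately from the factored form by writing $d_{1,2}=\frac{d_1+d_2}{2}\mp\frac{d_2-d_1}{2}$). Hence the product decays like $t^{-1/2}e^{-\frac{\sigma^2(d_2-d_1)^2}{8}t}\to 0$.

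For statement 3) in this regime ($d_1 d_2>0$, i.e. $r<0$, so both roots lie on the same side of $0$) the same constant appears: the mean value theorem again reduces the difference to $\phi(\xi_t)\frac{\log(b/a)}{\sigma\sqrt{t}}$ with $\xi_t^2\sim\sigma^2\left(\frac{d_1+d_2}{2}\right)^2 t$, and multiplying by $e^{P(0)t}$ produces the exponent $P(0)-\frac{\sigma^2}{2}\left(\frac{d_1+d_2}{2}\right)^2=-\frac{\sigma^2(d_2-d_1)^2}{8}<0$. The only real care needed is to keep track of the lower-order polynomial and logarithmic factors, which are harmless against this strictly negative exponential rate.
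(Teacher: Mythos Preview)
Your proof is correct and follows the same three-way case split as the paper (according to whether $P(\beta)<0$, $P(\beta)=0$, or $P(\beta)>0$). The only methodological difference is in the $P(\beta)>0$ regime: the paper simply invokes ``the Cauchy rule'' (L'H\^opital) without further detail, whereas you use the Gaussian tail estimate $1-\Phi(z)\sim\phi(z)/z$ together with the explicit identity $P(\beta)-\frac{\sigma^2}{2}\bigl(\tfrac{d_1+d_2}{2}-\beta\bigr)^{2}=-\frac{\sigma^2(d_2-d_1)^2}{8}$, and for statement~3) the mean value theorem in place of a bare limit claim. Your route has the advantage of making the decay rate completely transparent and of avoiding the need to differentiate $H(t;\cdot)$ in $t$; the paper's route is terser but would, if unpacked, lead to the same exponential comparison.
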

\begin{proof}
In order to prove the statements, we identify the following cases: (a) $\beta\in ]d_1,d_2[$, (b) $\beta\in\{d_1,d_2\}$ or (c) $\beta\in]0,\infty[\setminus[d_1,d_2]$. In situation (a), we have $P(\beta)<0$ and, consequently, the equalities are straightforward. If we are in situation (b), the equality in statement 1), 2) and 3) follows, respectively, in view of $\frac{d_1+d_2}{2}-d_1>0$,  $\frac{d_1+d_2}{2}-d_2<0$ and $\lim\limits_{t\to\infty}H(t;b)-H(t;a)=0$. Finally, in the case of (c), the result follows using the Cauchy rule.
\end{proof}

\section{Auxiliary results to section \ref{Sensitivity Analysis}}

In the following lemma, we present the behaviour of the roots of the characteristic equation, as given in \eqref{roots}. A similar result, but less general, can also be found in Guerra, Nunes and Oliveira \cite{nossoartigo}. 
\begin{lemma}\label{raizes decrescentes}
The functions $d_1(.,\sigma^2)$, $d_2(.,\sigma^2)$ and $d_2(\alpha,.)$ are strictly decreasing  and $d_1(\alpha,.)$ is a strictly increasing function. 
\end{lemma}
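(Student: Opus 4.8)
The plan is to treat $d_1$ and $d_2$ as the two real, distinct roots of the characteristic polynomial $P$ from \eqref{characteristic polynomial}, and to deduce the monotonicity statements from the signs of the partial derivatives $\partial d_i/\partial\alpha$ and $\partial d_i/\partial\sigma^2$. Although one could differentiate the closed forms in \eqref{roots} directly, it is cleaner to differentiate the defining identity $P(d_i)=0$ implicitly. Throughout I would abbreviate $\Delta:=\sqrt{\left(\tfrac{\sigma^2}{2}-\alpha\right)^2+2\sigma^2 r}$; by the standing assumption that $d_1\neq d_2$ are real — equivalently, condition \eqref{limite inferior par r} — one has $\Delta>0$ strictly, so none of the denominators occurring below can vanish.

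The first step is to record the slope of $P$ at its own roots. Since $P$ is an upward parabola with $d_1<d_2$, substituting \eqref{roots} gives $P'(d_1)=\sigma^2 d_1+\alpha-\tfrac{\sigma^2}{2}=-\Delta<0$ and, symmetrically, $P'(d_2)=\Delta>0$. For the drift I would then use $\partial P/\partial\alpha=d$, so that implicit differentiation of $P(d_i)=0$ produces $\partial d_i/\partial\alpha=-d_i/P'(d_i)$, i.e.
\[
\frac{\partial d_1}{\partial\alpha}=\frac{d_1}{\Delta},\qquad\frac{\partial d_2}{\partial\alpha}=-\frac{d_2}{\Delta}.
\]
Thus the sign of each $\alpha$-derivative is governed entirely by the sign of the corresponding root.

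For the volatility the only change is the partial $\partial P/\partial\sigma^2=\tfrac12\,d(d-1)$, which yields $\partial d_i/\partial\sigma^2=-d_i(d_i-1)/\bigl(2P'(d_i)\bigr)$, that is
\[
\frac{\partial d_1}{\partial\sigma^2}=\frac{d_1(d_1-1)}{2\Delta},\qquad\frac{\partial d_2}{\partial\sigma^2}=-\frac{d_2(d_2-1)}{2\Delta}.
\]
Here the sign is controlled by the location of each root relative to the points $0$ and $1$.

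The remaining — and genuinely decisive — step is therefore the localisation of the roots, which I would carry out by evaluating $P$ at the test abscissae $0$ and $1$: from $P(0)=-r$ and $P(1)=\alpha-r$ one reads off whether each root lies below $0$, inside $]0,1[$, or above $1$. Once it is established that $d_1<0$ and $d_2>1$, the four displayed expressions immediately give $\partial d_1/\partial\alpha<0$, $\partial d_2/\partial\alpha<0$, $\partial d_1/\partial\sigma^2>0$ and $\partial d_2/\partial\sigma^2<0$, which is exactly the asserted monotonicity. I expect this root-localisation — pinning down the sign of $d_1$ and the inequality $d_2>1$ from the standing assumptions, and treating the borderline configurations — to be the main obstacle, since everything else reduces to the single observation $\Delta>0$; it is precisely at this point that the sign information developed in lemma \ref{analise real} is brought to bear.
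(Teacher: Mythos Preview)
Your implicit-differentiation approach is a clean variant of what the paper does by direct differentiation of the closed forms \eqref{roots}; the paper obtains
\[
\frac{\partial d_1}{\partial \alpha}=\frac{1}{\sigma^2}\Bigl(-1+\frac{\frac{\sigma^2}{2}-\alpha}{\Delta}\Bigr),\qquad
\frac{\partial d_2}{\partial \alpha}=\frac{1}{\sigma^2}\Bigl(-1-\frac{\frac{\sigma^2}{2}-\alpha}{\Delta}\Bigr),\qquad
\frac{\partial d_i}{\partial \sigma^2}=\frac{2(-1)^i}{\sigma^4(d_2-d_1)}(\alpha d_i-r),
\]
and using \eqref{reparametrization} and $d_2-d_1=2\Delta/\sigma^2$ these coincide exactly with your $d_1/\Delta$, $-d_2/\Delta$, $d_1(d_1-1)/(2\Delta)$ and $-d_2(d_2-1)/(2\Delta)$.

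The genuine gap is your root-localisation step. You plan to establish $d_1<0$ and $d_2>1$ from $P(0)=-r$ and $P(1)=\alpha-r$, but under the standing hypotheses of this paper that is simply false: the paper explicitly allows $r\le 0$ (see \eqref{limite inferior par r} and the surrounding discussion), and for $r<0$ one has $d_1d_2=-2r/\sigma^2>0$, so both roots carry the same sign and $d_1<0<d_2$ fails. Even for $r>0$ the paper does not impose $\alpha<r$, and when $\alpha>r$ one gets $P(1)>0$, hence $d_2<1$. In those regimes your own formulas give $\partial d_1/\partial\alpha>0$ or $\partial d_2/\partial\sigma^2>0$, so the uniform strict monotonicity you are trying to prove cannot hold. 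This is not merely a defect of your argument: the lemma as stated is an overstatement, and the paper's own proof does \emph{not} establish global strict monotonicity either. What it actually does is derive the displayed derivatives and then split into cases according to the signs of $r$, $\alpha-r$ and $\alpha-\sigma^2/2$, recording the full sign pattern in Tables~\ref{table 1}--\ref{table 3}; those tables explicitly show, e.g., $\partial d_1/\partial\alpha>0$ when $r<0$ and $\alpha<\sigma^2/2$. So the correct target is that case analysis, not a single localisation inequality. Finally, your appeal to Lemma~\ref{analise real} is misplaced: that lemma controls the sign of integrals $\int s^m\log(s)f(s)\,ds$ and is used for the \emph{thresholds} $\gamma,\zeta$ in the proposition above; it has nothing to do with locating the roots of $P$.
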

\begin{proof}
We start by calculating
$$
\frac{\partial d_1}{\partial \alpha}=\frac{1}{\sigma^2}\left(-1+\frac{\frac{\sigma^2}{2}-\alpha}{\sqrt{\left(\frac{\sigma^2}{2}-\alpha\right)^2+2\sigma^2r}}\right)\quad\text{and}\quad\frac{\partial d_2}{\partial \alpha}=\frac{1}{\sigma^2}\left(-1-\frac{\frac{\sigma^2}{2}-\alpha}{\sqrt{\left(\frac{\sigma^2}{2}-\alpha\right)^2+2\sigma^2r}}\right).
$$
Consequently, the described monotonicity of $d_1(.,\sigma^2)$ and $d_2(.,\sigma^2)$ follows straightforwardly. In order to study the monotonicity of $d_1(\alpha,.)$ and $d_2(\alpha,.)$, we analyse the respective derivatives:
\begin{align}
\frac{\partial d_i}{\partial \sigma^2}=\frac{2(-1)^i}{\sigma^4(d_2-d_1)}(\alpha d_i-r),\quad i=1,2.
\end{align}
The monotonicity of $d_1(\alpha,.)$ is straightforward when either $\left(r>0 \text{ and } \alpha\geq 0\right)$ or $\left(r=0\right)$ or $\left(r<0\text{ and }0\leq \alpha<\frac{\sigma^2}{2}\right)$. As for $d_2(\alpha,.)$, the same holds when either $\left(r>0 \text{ and } \alpha\leq 0\right)$ or $\left(r=0\right)$ or $\left(r<0\text{ and }0\leq \alpha<\frac{\sigma^2}{2}\right)$.  
Moreover, the signs of $\frac{\partial d_1}{\partial \alpha}$ and $\frac{\partial d_2}{\partial \alpha}$ may be obtained from, respectively, the signs of 
\begin{align}\label{d_1'sigma1}
\alpha\left(\sqrt{\left(\frac{\sigma^2}{2}-\alpha\right)^2+2\sigma^2r}-\left(\frac{\sigma^2}{2}-\alpha\right)\right)+r\sigma^2\quad \alpha\left(\sqrt{\left(\frac{\sigma^2}{2}-\alpha\right)^2+2\sigma^2r}+\left(\frac{\sigma^2}{2}-\alpha\right)\right)-r\sigma^2.
\end{align}
Taking this into account, it is a matter of calculations to obtain the results described in the tables \ref{table 1}-\ref{table 3}.
\end{proof}

\begin{lemma}\label{analise real}
Let $f:]0,\infty[\to\mathbb{R}$, such that $f(x)\leq 0$ for $x<a$, and $f(x)\geq 0$ for $x>a$, with $a>0$.  Moreover, assume that there exists  $z\in]0,\infty[$ (resp., $y\in]0,\infty[$) and $m\in \mathbb{R}$ (resp., $n\in \mathbb{R}$), such that
$\int_{z}^{\infty}s^{m}f(s)ds= 0$  (resp., $\int_{0}^{y}s^{n}f(s)ds= 0
$). 
Then,  
$$\int_{z}^{\infty}s^{m}\log(s)f(s)ds> 0 \quad \left(\text{resp., }\int_{0}^{y}s^{n}\log(s)f(s)ds> 0\right).$$
\end{lemma}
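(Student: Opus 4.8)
The plan is to prove the first (non-parenthetical) statement; the parenthetical version follows by a symmetric argument. Let me abbreviate $I := \int_z^\infty s^m \log(s) f(s)\,ds$. The key observation is that the hypothesis $\int_z^\infty s^m f(s)\,ds = 0$ says that the signed measure $s^m f(s)\,ds$ on $]z,\infty[$ has total mass zero, while $f$ changes sign exactly once, being nonpositive on $]z,a[$ (if $z < a$) and nonnegative on $]a,\infty[$. The weight $\log(s)$ is strictly increasing, so it up-weights the region where $f \ge 0$ and down-weights the region where $f \le 0$, relative to a constant weight. I expect this monotone-reweighting idea to be the whole engine of the proof.

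\textbf{Main step.} Since $\int_z^\infty s^m f(s)\,ds = 0$, I may subtract $\log(a)$ times this integral from $I$ without changing its value:
\begin{equation*}
I = \int_z^\infty s^m \big(\log(s) - \log(a)\big) f(s)\,ds = \int_z^\infty s^m \log\!\Big(\tfrac{s}{a}\Big) f(s)\,ds.
\end{equation*}
Now I analyze the sign of the integrand pointwise. For $s > a$ we have $\log(s/a) > 0$ and $f(s) \ge 0$, so the integrand is $\ge 0$; for $z \le s < a$ we have $\log(s/a) < 0$ and $f(s) \le 0$, so the integrand is again $\ge 0$. Hence the integrand $s^m \log(s/a) f(s)$ is nonnegative almost everywhere on $]z,\infty[$, which immediately gives $I \ge 0$.

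\textbf{Upgrading to strict positivity.} The remaining work is to rule out $I = 0$, i.e. to show the integrand is not almost-everywhere zero. If $I = 0$, then $s^m \log(s/a) f(s) = 0$ for a.e. $s$, which forces $f(s) = 0$ for a.e. $s \in ]z,\infty[ \setminus \{a\}$ (the factor $s^m \log(s/a)$ vanishes only at $s=a$, a null set). But if $f \equiv 0$ a.e. on $]z,\infty[$, the running-function structure is degenerate in a way that contradicts the standing hypotheses (the profit function genuinely changes sign at $a$, so $f$ is strictly negative on a set of positive measure just left of $a$ and strictly positive just right of $a$, by Assumption~\ref{A1} applied to the relevant interval). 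This is the step I expect to be the mildest obstacle: one must make precise that the sign-change at $a$ is nondegenerate, i.e. that $f$ is not a.e.\ zero on either side near $a$, which is exactly what the open-interval strict inequalities in Assumption~\ref{A1} provide. Once that is in hand, the integrand is strictly positive on a set of positive measure, giving $I > 0$.

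\textbf{The parenthetical case} $\int_0^y s^n f(s)\,ds = 0 \Rightarrow \int_0^y s^n \log(s) f(s)\,ds > 0$ is handled identically: subtract $\log(a)$ times the vanishing integral to rewrite the target as $\int_0^y s^n \log(s/a) f(s)\,ds$, observe the integrand is again nonnegative (on $]0,a[$ both factors flip sign together, on $]a,y[$ both are nonnegative), and conclude strict positivity by the same nondegeneracy argument. The only subtlety to flag is integrability of the $\log$-weighted integrals near the endpoints $0$ and $\infty$; this is inherited from the integrability already guaranteed for $s^m f(s)$ under Assumption~\ref{A2} and Remark~\ref{l1}, since $\log$ grows slower than any power and can be absorbed by adjusting the exponent comparison used there.
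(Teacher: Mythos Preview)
Your proof is correct and follows essentially the same idea as the paper's: subtract a constant multiple of the vanishing integral so that the sign structure of $f$ around $a$ forces positivity. The only cosmetic difference is that you center at $\log(a)$ (making the integrand pointwise nonnegative), whereas the paper centers at $\log(z)$ and then compares $\log(s/z)$ against the constant $\log(a/z)$ on each side of $a$; your choice is slightly cleaner but the content is the same, and both arguments rely on the same implicit nondegeneracy of $f$ for the strict inequality.
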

\begin{proof}
Considering that $\int_{z}^{\infty}s^{m}\log(s)f(s)ds=\int_{z}^{\infty}{s}^{m}\log\left(\frac{s}{z}\right)f(s)ds$, the result follows from
\begin{equation}\label{desigualdades comparativestatic 1}
\int_{z}^{\infty}{s}^{m}\log\left(\frac{s}{z}\right)f(s)ds>\log\left(\frac{a}{z}\right)\int_{z}^{a}{s}^{m}f(s)ds+\log\left(\frac{a}{z}\right)\int_{a}^{\infty}{s}^{m}f(s)ds=0.
\end{equation}
The second inequality in \eqref{desigualdades comparativestatic 1} may be proved using a similar argument.
\end{proof}

\section*{Acknowledgements}

Manuel Guerra was partially supported by the Project CEMAPRE - UID/MULTI/00491/2013 financed by FCT/MEC through national funds. Carlos Oliveira was supported by the Funda\c{c}\~ao para a Ci\^encia e Tecnologia (FCT) under Grant SFRH/BD/102186/2014.

\bibliographystyle{plain}
\bibliography{myrefs}

\end{document}